\newtheorem{theorem}{Theorem}[section]
\newtheorem{corollary}[theorem]{Corollary}
\theoremstyle{definition}
\theoremstyle{remark}
\numberwithin{equation}{section}
\newcommand{\Gf}{\mathrm{der}_{\theta, b}(G, \cdot)}
\begin{document}
\title{Equations in  polyadic groups}
\author{H. Khodabandeh}
\address{H. Khodabandeh\\
Department of Pure Mathematics,  Faculty of Mathematical
Sciences, University of Tabriz, Tabriz, Iran }
\author{\sc M. Shahryari}
\thanks{{\scriptsize
\hskip -0.4 true cm MSC(2010): Primary 20N15, Secondary 08A99 and 14A99
\newline Keywords: Polyadic groups; $n$-ary groups; Post's cover; Free polyadic groups, Equations; Universal algebraic geometry; Algebraic sets; Equational noetherian property; Coordinate polyadic groups}}

\address{M. Shahryari\\
 Department of Pure Mathematics,  Faculty of Mathematical
Sciences, University of Tabriz, Tabriz, Iran }

\email{mshahryari@tabrizu.ac.ir}
\date{\today}

%%% ----------------------------------------------------------------------
\begin{abstract}
Systems of equations and their solution sets are studied in polyadic groups. We prove that a polyadic group $(G, f)=\mathrm{der}_{\theta, b}(G, \cdot)$ is equational noetherian, if and only if the ordinary group $(G, \cdot)$ is equational noetherian. The structure of coordinate polyadic group of algebraic sets in equational noetherian polyadic groups are also determined.
\end{abstract}

\maketitle

%%%%%%%%%%%%%%%%%%%%%%%%%%%%%%%%%%%%%%%%%%%%%%%%%%%%%%%%%%%%%%%%%%%%%%

\section{Introduction}
In the recent years, the study of equations in various algebraic structures has been  the subject of many research activities. Investigation of equations over groups, especially free  and hyperbolic groups, was the main motivation for the invention of algebraic geometry over groups, \cite{BMR1}. This interesting subject is the result of decades of efforts of many algebraists to solve the classical problems of Alfred Tarski concerning the first order theory of non-abelian free groups (see \cite{KMTarski}, \cite{KMTarski2} and \cite{KM}). Equations over Lie algebras, associative algebras and semigroups are also studied  (for example, see \cite{DMR2}, \cite{DR1}, and \cite{SHEV}). In a series of fundamental papers, Daniyarova, Myasnikov and Remeslennikov introduced the basics of {\em universal algebraic geometry} (\cite{DMR1}, \cite{DMR2}, \cite{DMR3} and \cite{DMR4}).

In this article, we study equations over {\em polyadic groups}. A polyadic group is a  natural generalization of the concept of group to the case where the binary operation of group replaced with an $n$-ary associative operation, one variable linear equations in which have unique solutions (see the next section for the detailed definitions). These interesting algebraic objects are introduced by Kasner and D\"ornte (\cite{Kas} and \cite{Dor}) and  studied extensively by Emil Post during the first decades of the last century, \cite{Post}. During decades, many articles are published on the structure of polyadic groups. The authors of this article, already studied homomorphisms and automorphisms of polyadic groups, \cite{Khod-Shah}. A characterization of the simple polyadic groups is obtained by them in \cite{Khod-Shah2}. The second author with W. Dudek, studied representation theory of polyadic groups, \cite{Dud-Shah}. The complex characters of finite polyadic groups are also investigated by the second author in \cite{Shah2}.

We will begin to study of algebraic geometry over polyadic groups. It is known that for every polyadic ($n$-ary) group $(G, f)$, there exists a corresponding ordinary group $(G, \cdot)$, an automorphism $\theta$ of this ordinary group, and an element $b\in G$, the structure of $(G, f)$ in which complectly determined by  $(G, \cdot)$, $\theta$, and $b$. Our main result shows that the algebraic geometries of the polyadic group $(G, f)$ and the corresponding group $(G, \cdot)$ are essentially the same. This algebraic geometry does not depend on the automorphism $\theta$ and the special element $b$. This result gives us an interesting application of polyadic groups for the algebraic geometry over groups. Here we describe this application briefly.  Let $G$ be an ordinary group and suppose $\theta$ is an automorphism of $G$ satisfying the following two conditions:\\

{\em 1- There exists a fixed point $b\in G$ for $\theta$.\\

2- There exists a natural number $n\geq 2$ such that for all $x\in G$, $\theta^{n-1}(x)=bxb^{-1}$}.\\

Let $\mathcal{L}$ be the ordinary language of groups containing elements of $G$ as parameters. We obtain an extended language $\mathcal{L}_{\theta}$ by adding $\theta$ to $\mathcal{L}$ as a unary functional symbol. We will show that if the group $G$ is equational noetherian as an $\mathcal{L}$-algebra, then it is also equational noetherian as an $\mathcal{L}_{\theta}$-algebra (similar statements are also true for the properties of being $q_{\omega}$-compact and $u_{\omega}$-compact).

A compact review of the basic concepts of polyadic group is given in the next section. Our notations dealing with universal algebraic geometry, are the same as \cite{DMR1}, \cite{DMR2}, \cite{DMR3}, \cite{DMR4}. The reader should consult  these references for a complete account of the universal algebraic geometry.

\section{Polyadic groups}
This section contains basic notions and properties of polyadic groups as well as some literature.
Let $G$ be a non-empty set and $n$ be a positive integer. If
$f:G^n\to G$ is an $n$-ary operation, then we use the compact
notation $f(x_1^n)$ for the elements $f(x_1, \ldots, x_n)$. In
general, if $x_i, x_{i+1}, \ldots, x_j$ is an arbitrary sequence of
elements in $G$, then we denote it as $x_i^j$. In the special case,
when all terms of this sequence are equal to a constant $x$, we denote
it by $\stackrel{(t)}{x}$, where $t$ is the number of terms.  We say that an $n$-ary
operation is {\em associative}, if for any $1\leq i<j\leq n$, the
equality
$$
f(x_1^{i-1},f(x_i^{n+i-1}),x_{n+i}^{2n-1})=
f(x_1^{j-1},f(x_j^{n+j-1}),x_{n+j}^{2n-1})
$$
holds for all $x_1,\ldots,x_{2n-1}\in G$. An $n$-ary system $(G,f)$
is called an {\em $n$-ary group} or a {\em polyadic group}, if $f$
is associative and for all $a_1,\ldots,a_n, b\in G$ and $1\leq i\leq
n$, there exists a unique element $x\in G$ such that
$$
f(a_1^{i-1},x,a_{i+1}^n)=b.
$$
It is proved that the uniqueness assumption on the solution $x$ can
be dropped \cite{Dud2}. Clearly, the case $n=2$ is
just the definition of ordinary groups. During
this article, we assume that $n> 2$.
T
he classical paper of E. Post \cite{Post},  is
one of the first articles published on the subject. In this paper,
Post proves his well-known {\em coset theorem}. Many basic
properties of polyadic groups are studied in this paper. The
articles \cite{Dor} and \cite{Kas} are among the first materials
written on the polyadic groups. Russian reader, can use
the book of Galmak \cite{Gal}, for an almost complete description of
polyadic groups.
The articles \cite{Art}, \cite{Dud1}, \cite{Gal2}, and \cite{Gle}
can be used for study of axioms of polyadic groups as well as their
varieties.

Note that an $n$-ary system $(G,f)$ of the form $\,f(x_1^n)=x_1
x_2\ldots x_nb$, where $(G,\cdot)$ is a group and $b$  a fixed
element belonging to the center of $(G,\cdot)$, is an $n$-ary group.
Such an $n$-ary group is called {\em $b$-derived} from the group
$(G,\cdot)$ and it is denoted by $\mathrm{der}_b^n(G, \cdot)$. In the case when $b$ is the identity of $(G,\cdot)$, we
say that such a polyadic group is {\em reduced} to the group $(G, \cdot
)$ or {\em derived} from $(G, \cdot)$ and we use the notation $\mathrm{der}^n(G, \cdot)$ for it. For every $n>2$, there
are $n$-ary groups which are not derived from any group. An $n$-ary
group $(G,f)$ is derived from some group if and only if it contains
an element $a$ (called an {\em $n$-ary identity}) such that
$$
 f(\stackrel{(i-1)}{a},x,\stackrel{(n-i)}{a})=x
$$
holds for all $x\in G$ and for all $i=1,\ldots,n$, see \cite{Dud2}.

From the definition of an $n$-ary group $(G,f)$, we can directly see
that for every $x\in G$, there exists only one $y\in G$,  satisfying
the equation
$$
f(\stackrel{(n-1)}{x},y)=x.
$$
This element is called {\em skew} to $x$ and it is denoted by
$\overline{x}$.    As D\"ornte \cite{Dor} proved, the
following identities hold for all $\,x,y\in G$, $2\leq i\leq n$,
$$
f(\stackrel{(i-2)}{x},\overline{x},\stackrel{(n-i)}{x},y)=
f(y,\stackrel{(n-i)}{x},\overline{x},\stackrel{(i-2)}{x})=y.
$$
These identities together with the associativity identities, axiomatize the variety of polyadic groups in the algebraic language $(f, ^{-})$.

Suppose $(G, f)$ is a polyadic group and $a\in G$ is a fixed
element. Define a binary operation
$$
x\ast y=f(x,\stackrel{(n-2)}{a},y).
$$
Then $(G, \ast)$ is an ordinary group, called the {\em retract} of
$(G, f)$ over $a$. Such a retract will be denoted by $ret_a(G,f)$. All
retracts of a polyadic group are isomorphic \cite{DM}. The
identity of the group $(G,\ast)$ is $\overline{a}$. One can verify
that the inverse element to $x$ has the form
$$
y=f(\overline{a},\stackrel{(n-3)}{x},\overline{x},\overline{a}).
$$

One of the most fundamental theorems of polyadic group is the
following, now known as {\em Hossz\'{u} -Gloskin's Theorem}. We will use
it frequently in this article and the reader can use \cite{DG},
\cite{DM1}, \cite{Hos} and \cite{Sok} for detailed discussions.

\begin{theorem}
Let $(G,f)$ be an $n$-ary group. Then there exists an ordinary group $(G, \cdot)$,
an automorphism $\theta$ of $(G, \cdot)$ and an element $b\in G$ such that

1.  $\theta(b)=b$,

2.  $\theta^{n-1}(x)=b x b^{-1}$, for every $x\in
G$,

3.  $f(x_1^n)=x_1\theta(x_2)\theta^2(x_3)\cdots\theta^{n-1}(x_n)b$, for
all $x_1,\ldots,x_n\in G$.

\end{theorem}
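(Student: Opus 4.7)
The plan is to bootstrap everything from the retract construction. Fix an arbitrary $a\in G$ and set $(G,\cdot) := \mathrm{ret}_a(G,f)$, i.e.\ $x\cdot y = f(x,\stackrel{(n-2)}{a},y)$. By the discussion preceding the theorem, $(G,\cdot)$ is already an ordinary group with identity $\overline{a}$, so the binary ingredient is in hand, and what remains is to manufacture $\theta$ and $b$ out of $a$ and to verify the three listed conditions.

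Guided by the target formula, I would try the candidates
\[
\theta(x) := f(a, x, \stackrel{(n-3)}{a}, \overline{a}), \qquad b := f(\stackrel{(n)}{a}).
\]
The heuristic is that $\theta$ measures the cost of shifting an argument one slot to the right inside an $n$-ary product, so that rearranging $f(x_1,\ldots,x_n)$ into a cascade of binary products deposits exactly the twist $\theta^{k-1}$ onto the $k$-th entry, while the $a$'s used to fill empty slots ultimately collapse into the constant $b$.

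With these candidates fixed, the proof splits into three computations, each carried out by expanding the relevant polyadic terms and repeatedly collapsing blocks of the form $(\overline{a},\stackrel{(n-2)}{a})$ or $(\stackrel{(n-2)}{a},\overline{a})$ via the D\"ornte identities and $n$-ary associativity. First, $\theta$ should be a homomorphism of $(G,\cdot)$ and admit an inverse of the same syntactic shape, hence be an automorphism. Second, $\theta(b)=b$ should follow from the symmetry of $b$ under such cancellations, while $\theta^{n-1}(x)=bxb^{-1}$ should be obtained by iterating the definition of $\theta$ a total of $n-1$ times and recognising the resulting expression as conjugation of $x$ by $b$ in the retract.

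The genuine obstacle is the representation formula in (iii). Rewriting a single $n$-ary term as a chain of $n$ binary products in the retract forces the insertion of $n-2$ copies of $a$ (with matching $\overline{a}$'s) between every two consecutive arguments, and the bookkeeping of how these insertions conjugate the later arguments is the delicate part. I would handle this by induction on $k$: show that after processing $x_1,\ldots,x_k$ the cumulative twist on the remaining arguments is exactly $\theta^{k-1}$, and that all surviving $a$'s at the very end consolidate into $f(\stackrel{(n)}{a})=b$. Once the inductive step is verified, the representation formula follows immediately, and together with the automorphism and compatibility properties established above completes the proof.
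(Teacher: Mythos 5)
The paper does not actually prove this theorem: it is quoted as the Hossz\'u--Gluskin theorem with pointers to \cite{DG}, \cite{DM1}, \cite{Hos} and \cite{Sok}, so there is no in-paper argument to measure you against. Your architecture --- pass to the retract $x\cdot y=f(x,\stackrel{(n-2)}{a},y)$ with identity $\overline a$, exhibit $\theta$ and $b$ as explicit polyadic terms in $a$ and $\overline a$, and verify condition~3 by induction on the number of processed arguments --- is exactly the standard Sokolov-style proof. The genuine gap is that your concrete candidates are wrong, and since the entire proof consists of verifying them, the verification of conditions 2 and 3 would not go through. Once the retract is fixed, condition~3 \emph{forces} the data: putting $x_1=\cdots=x_n=\overline a$ (and using $\theta(\overline a)=\overline a$) gives $b=f(\stackrel{(n)}{\overline a})$, and putting $x_2=x$ with all other arguments equal to $\overline a$ gives $\theta(x)=f(\overline a,x,\stackrel{(n-2)}{a})$. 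Your $b=f(\stackrel{(n)}{a})$ and $\theta(x)=f(a,x,\stackrel{(n-3)}{a},\overline a)$ are different. Concretely, in $\mathrm{der}^n(\mathbb Z,+)$ with $a=1$ one has $\overline a=2-n$ and $\theta=\mathrm{id}$; the right-hand side of condition~3 with your $b=n$ equals $x_1+\cdots+x_n+n(n-2)+n=f(x_1^n)+n(n-1)$, whereas the required constant $f(\stackrel{(n)}{\overline a})=n(2-n)$ cancels the $n(n-2)$ exactly.

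For $\theta$ the failure is structural rather than a constant. Working inside the Post cover, where $f(x_1^n)=x_1\circ\cdots\circ x_n$ and $\overline a=a^{2-n}$, your map is $x\mapsto a\circ x\circ a^{-1}$, while the telescoping your induction needs is $x\mapsto \overline a\circ x\circ a^{n-2}=a^{2-n}\circ x\circ a^{n-2}$. Indeed the first step already breaks: $x_1\cdot\theta(x_2)$ must collapse to $x_1\circ x_2\circ(\text{term independent of }x_1,x_2)$, and with your $\theta$ it equals $x_1\circ a^{n-1}\circ x_2\circ a^{-1}$, which is not of that form unless $a^{n-1}$ is central (take $\mathrm{der}^n$ of a non-abelian group with $a^{n-1}$ non-central to see it fail). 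With the corrected candidates $\theta(x)=f(\overline a,x,\stackrel{(n-2)}{a})$ and $b=f(\stackrel{(n)}{\overline a})$, your inductive scheme does work: after $k$ factors the accumulated product is $x_1\circ\cdots\circ x_k\circ a^{(k-1)(n-2)}$, and the terminal $a^{n(n-2)}$ is absorbed precisely by $b=a^{n(2-n)}$; conditions 1 and 2 then follow by the same power-of-$a$ bookkeeping. (As a side remark, the same computation shows that the formula $b=a^{(n-1)(2-n)}$ asserted in Section~3 of the paper for $F_{pol}^n(X)$ should read $b=a^{n(2-n)}$, since $a^{(n-1)(2-n)}$ does not even lie in $F_{pol}^n(X)$.)
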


According to this theorem, we  use the notation $\Gf$ for $(G,f)$
and we say that $(G,f)$ is $(\theta, b)$-derived from the group $(G,
\cdot)$. During this paper, we will assume that $(G, f)=\Gf$.

Varieties of polyadic groups and the structure of congruences on
polyadic groups are studied in \cite{Art} and \cite{Dud1}. It is proved that all congruences on polyadic groups
are commute and so the lattice of congruences is modular.

We established some  fundamental results on the structure of
homomorphisms of polyadic groups in \cite{Khod-Shah}. The main result of \cite{Khod-Shah} will be very
strong tool to determine the structure of coordinate polyadic groups in the next sections.

\begin{theorem}
Suppose $(G,f)=\mathrm{der}_{\theta, b}(G, \cdot)$ and $(H, h)=\mathrm{der}_{\eta,
c}(H, \ast)$ are two polyadic groups. Let $\psi: (G,f)\to (H, h)$ be
a homomorphism. Then there exists $a\in H$ and an ordinary
homomorphism $\phi:(G, \cdot)\to (H, \ast)$, such that $\psi=R_a\phi$, where $R_a$ denotes the map $x\mapsto x\ast a$. Further $a$ and $\phi$ satisfy the following conditions;\\
$$
h(\stackrel{(n)}{a})=\phi(b)\ast a\ \ \ and \ \ \
\phi\theta=I_{a}\eta\phi,
$$
where, $I_a$ denotes the inner automorphism $x\mapsto a\ast x\ast
a^{-1}$.

Conversely, if $a$ and $\phi$ satisfy the above two conditions, then
$\psi=R_a\phi$ is a homomorphism $(G,f)\to  (H,h)$.
\end{theorem}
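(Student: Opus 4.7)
The plan is to extract the pair $(a, \phi)$ directly from $\psi$, verify both required conditions, and then establish the converse by a telescoping calculation. Let $e$ denote the identity of $(G, \cdot)$, set $a := \psi(e)$, and define $\phi(x) := \psi(x) \ast a^{-1}$, so that $\psi = R_a \phi$ and $\phi(e) = 1_H$ automatically. Since $\theta$ fixes the identity, the Hossz\'u-Gloskin formula gives $f(\stackrel{(n)}{e}) = b$; applying $\psi$ yields $h(\stackrel{(n)}{a}) = \psi(b) = \phi(b) \ast a$, which is the first required condition.

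To show $\phi$ is an ordinary group homomorphism, the key observation is that
$$f(x, \stackrel{(n-2)}{e}, y) \;=\; x \cdot \theta^{n-1}(y) \cdot b \;=\; x \cdot (byb^{-1}) \cdot b \;=\; xby,$$
while the right-hand side $h(\psi(x), \stackrel{(n-2)}{a}, \psi(y))$ collapses, using $\eta^{n-1}(\psi(y)) = c \ast \psi(y) \ast c^{-1}$, to $\psi(x) \ast C \ast \psi(y)$ for a single constant $C \in H$. Specializing $x = y = e$ identifies $C = a^{-1} \ast \psi(b) \ast a^{-1}$, so rewriting in terms of $\phi$ produces $\phi(xby) = \phi(x) \ast \phi(b) \ast \phi(y)$. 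The special case $y = e$ gives $\phi(xb) = \phi(x) \ast \phi(b)$; combining these two identities after the substitution $x \mapsto u b^{-1}$ yields $\phi(uy) = \phi(u) \ast \phi(y)$.

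For the second condition, I would apply $\psi$ to the identity $f(e, x, \stackrel{(n-2)}{e}) = \theta(x) \cdot b$, expand both sides via Hossz\'u-Gloskin, and solve algebraically for $\phi(\theta(x))$. Using condition 1 already in hand to simplify the constant tail, together with the factorization $\eta(\psi(x)) = \eta\phi(x) \ast \eta(a)$, collapses the expression exactly to $\phi\theta(x) = a \ast \eta\phi(x) \ast a^{-1} = I_a \eta\phi(x)$.

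Conversely, given $(\phi, a)$ satisfying the two conditions, set $\psi := R_a \phi$. An easy induction from condition 2 gives $\phi\theta^k = (I_a \eta)^k \phi$ for all $k \geq 0$, i.e.\
$$\phi\theta^k(x) \;=\; \bigl(a \ast \eta(a) \ast \cdots \ast \eta^{k-1}(a)\bigr) \ast \eta^k\phi(x) \ast \bigl(\eta^{k-1}(a)^{-1} \ast \cdots \ast a^{-1}\bigr).$$
Plugging these into $\psi(f(x_1^n)) = \phi(x_1) \ast \phi\theta(x_2) \ast \cdots \ast \phi\theta^{n-1}(x_n) \ast \phi(b) \ast a$ and comparing with $h(\psi(x_1), \ldots, \psi(x_n))$, the trailing conjugators in each $\phi\theta^k(x_{k+1})$ telescope with the leading conjugators of the next term, with residual factors $\eta^k(a)$ appearing that fit precisely into the expansion $\eta^k(\psi(x_{k+1})) = \eta^k\phi(x_{k+1}) \ast \eta^k(a)$ on the right side; the boundary factor then matches exactly by condition 1. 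The main obstacle is this telescoping bookkeeping, since the $\eta^i(a)$ do not commute; once the cancellation pattern is set up correctly, the verification is mechanical.
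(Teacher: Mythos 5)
Your proof is correct. Note that the paper itself gives no proof of this statement: it is Theorem 2.2, quoted as the main result of the earlier paper \cite{Khod-Shah}, so there is no in-text argument to compare yours against. Judged on its own, your argument is complete and sound. The choice $a=\psi(e)$, $\phi(x)=\psi(x)\ast a^{-1}$ immediately gives $\psi=R_a\phi$; applying $\psi$ to $f(\stackrel{(n)}{e})=b$ gives the first condition; the identity $f(x,\stackrel{(n-2)}{e},y)=xby$ together with the specialization $x=y=e$ correctly pins down the constant $C=\eta(a)\ast\cdots\ast\eta^{n-2}(a)\ast c=a^{-1}\ast\psi(b)\ast a^{-1}$, yielding $\phi(xby)=\phi(x)\ast\phi(b)\ast\phi(y)$, and your substitution $x\mapsto ub^{-1}$ legitimately upgrades this to multiplicativity of $\phi$ --- this is the one step where a careless reader might think you only proved a restricted identity, but the reduction is valid. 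The derivation of $\phi\theta=I_a\eta\phi$ from $f(e,x,\stackrel{(n-2)}{e})=\theta(x)b$ using condition 1 to rewrite the tail $\eta(a)\ast\cdots\ast\eta^{n-1}(a)\ast c$ as $a^{-1}\ast\phi(b)\ast a$ is exactly right. For the converse, the telescoping does close up: with $A_k=a\ast\eta(a)\ast\cdots\ast\eta^{k-1}(a)$ one has $\phi\theta^k(x)=A_k\ast\eta^k\phi(x)\ast A_k^{-1}$ and $A_k^{-1}\ast A_{k+1}=\eta^k(a)$, so each residual factor is absorbed as $\eta^k\phi(x_{k+1})\ast\eta^k(a)=\eta^k(\psi(x_{k+1}))$, and the final boundary identity $A_{n-1}^{-1}\ast\phi(b)\ast a=\eta^{n-1}(a)\ast c$ is precisely a restatement of $h(\stackrel{(n)}{a})=\phi(b)\ast a$. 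Nothing is missing.
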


There is one more important object associated to polyadic groups. Let $(G, f)$ be a polyadic group. Then, as Post proved, there exists a unique group $(G^{\ast}, \circ)$ (which we call now the Post's cover of $(G, f)$) such that \\

{\em 1- $G$ is contained in $G^{\ast}$ as a coset of a normal subgroup $R$.\\

2- $R$ is isomorphic to a retract of $(G, f)$.\\

3- We have $G^{\ast}/R\cong \mathbb{Z}_{n-1}$.\\

4- Inside $G^{\ast}$, for all $x_1, \ldots, x_n\in G$, we have
$f(x_1^n)=x_1\circ x_2\circ \cdots \circ x_n$. \\

5- $G^{\ast}$ is generated by $G$.}\\

The group $G^{\ast}$ is also universal in the class of all groups having properties 1, 4. More precisely, if $\beta: (G, f)\to \mathrm{der}^n(H, \ast)$ is a polyadic homomorphism, then there exists a unique ordinary homomorphism $h: G^{\ast}\to H$ such that $h_{|_{G}}=\beta$. This universal property characterizes $G^{\ast}$ uniquely. The explicit construction of the Post's cover can be find in \cite{Shah2}.

\section{Free polyadic groups}
To be prepared to study of equations, we need to introduce the free polyadic groups. This section also can be used as an independent survey of free polyadic groups. In \cite{Artam}, Artamonov showed that $(G, f)$ is free, if and only if, its Post cover is free and so he obtained a description of polyadic subgroups of $(G, f)$. Here, we introduce two methods of explicit construction of free polyadic groups. Let $X$
be our alphabet set. For any group word $w=x_1^{\epsilon_1}\cdots x_m^{\epsilon_m}$, we define the hight $ht(w)=\sum_i\epsilon_i$. Clearly this number does not depend on the special representation of $w$. Let $F_{pol}^n(X)$ be the set of all elements $w$ in the ordinary free group $F(X)$ with the property $ht(w)\equiv 1 (\mathrm{mod}\ n-1)$. Define an $n$-ary operation $f$ on this set by
$$
f(w_1, \ldots, w_n)=w_1w_2\cdots w_n,
$$
where the product in the right hand side is the operation of the free group.

\begin{theorem}
$F_{pol}^n(X)$ is the free $n$-ary group on the set $X$ and its Post cover is $F(X)$.
\end{theorem}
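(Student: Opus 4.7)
The plan is to establish the Post-cover claim first and then deduce freeness from it, since the universal property of Post's cover will transport the freeness of $F(X)$ as an ordinary group into freeness of $(F_{pol}^n(X), f)$ as an $n$-ary group. Set $R=\{w\in F(X):ht(w)\equiv 0\pmod{n-1}\}$, the kernel of the composite $F(X)\to\mathbb{Z}\to\mathbb{Z}_{n-1}$ obtained by extending $x\mapsto 1$ (for $x\in X$) and reducing modulo $n-1$. Then $R$ is a normal subgroup, $F(X)/R\cong\mathbb{Z}_{n-1}$, and $F_{pol}^n(X)$ is the coset of $R$ corresponding to $1$; property~4 of a Post cover holds by the very definition of $f$, and $F(X)$ is generated by $X\subseteq F_{pol}^n(X)$. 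The only non-trivial axiom is that $R$ be isomorphic to a retract of $(F_{pol}^n(X), f)$: fixing any $a\in F_{pol}^n(X)$ I would set $\phi(x)=xa^{n-2}$ with multiplication in $F(X)$; then height arithmetic shows $\phi$ is a bijection $ret_a(F_{pol}^n(X), f)\to R$, while the retract operation $x\ast y=xa^{n-2}y$ gives $\phi(x\ast y)=xa^{n-2}ya^{n-2}=\phi(x)\phi(y)$ immediately. Along the way one must separately verify that $(F_{pol}^n(X), f)$ really is an $n$-ary group: closure is $n\equiv 1\pmod{n-1}$, associativity is inherited from $F(X)$, and the unique $F(X)$-solution of $a_1\cdots a_{i-1}\,x\,a_{i+1}\cdots a_n=b$ has height $2-n\equiv 1\pmod{n-1}$, so it already lies in $F_{pol}^n(X)$.

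For freeness, given a polyadic group $(H, h)$ with Post cover $(H^\ast,\bullet)$ and a map $\varphi:X\to H$, the universal property of the ordinary free group yields a unique group homomorphism $\widetilde\varphi:F(X)\to H^\ast$ extending $\varphi$. Because $H\subseteq H^\ast$ is the coset of $R_H$ corresponding to $1\in\mathbb{Z}_{n-1}$ and $\widetilde\varphi$ sends each generator into $H$, the element $\widetilde\varphi(w)$ lies in the coset labelled $ht(w)\bmod(n-1)$; in particular $\widetilde\varphi(F_{pol}^n(X))\subseteq H$. Property~4 of Post covers then upgrades this restriction to a polyadic homomorphism $(F_{pol}^n(X), f)\to (H, h)$. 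Uniqueness is immediate from the Post-cover universal property established in the first paragraph: any polyadic extension $\psi$ of $\varphi$, composed with the inclusion $(H, h)\hookrightarrow \mathrm{der}^n(H^\ast,\bullet)$, lifts uniquely to an ordinary homomorphism $F(X)\to H^\ast$ agreeing with $\varphi$ on $X$, and hence coincides with $\widetilde\varphi$ by freeness of $F(X)$.

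The only genuinely delicate point is the retract isomorphism in the Post-cover step; once the candidate $\phi(x)=xa^{n-2}$ is identified, everything else reduces to height arithmetic modulo $n-1$ and to transporting the universal property between the ordinary and polyadic settings.
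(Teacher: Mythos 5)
Your proof is correct, but it reaches both conclusions by a genuinely different route than the paper. For the Post-cover claim, the paper verifies the \emph{universal} characterization directly: it takes an arbitrary polyadic homomorphism $\beta: F_{pol}^n(X)\to \mathrm{der}^n(H,\ast)$, extends $\beta|_X$ to a group homomorphism $h:F(X)\to H$, and shows $h$ restricts to $\beta$ by a combinatorial trick --- rewriting each negative power $\beta(x_i)^{-1}$ via $\beta(w^{2-n})\ast\beta(w)^{n-3}$ so that the word becomes a product of $k(n-1)+1$ values of $\beta$ on elements of $F_{pol}^n(X)$. You instead verify the structural characterization (properties 1--5 of Post's coset theorem), the only nontrivial item being the explicit isomorphism $\phi(x)=xa^{n-2}$ from $ret_a(F_{pol}^n(X),f)$ onto the height-zero subgroup $R$; this replaces the paper's word manipulation with height arithmetic, but it leans on Post's uniqueness statement and on the fact, quoted in Section 2, that the group satisfying 1--5 automatically enjoys the universal property --- your uniqueness step needs that implication, so you should cite it rather than say the universal property was ``established in the first paragraph.'' For freeness, the paper invokes Artamonov's theorem (a polyadic group is free iff its Post cover is free), whereas you give the direct argument the paper only alludes to at the end of its proof: lift $X\to H$ to $F(X)\to H^{\ast}$, check by coset bookkeeping that $F_{pol}^n(X)$ lands in $H$, and deduce uniqueness by lifting back through the Post cover. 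Each approach buys something: the paper's is self-contained at the level of words, while yours is more structural and makes the freeness claim independent of Artamonov's result.
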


\begin{proof}
As $F_{pol}^n(X)$ is closed under $f$, it is clearly a polyadic group. Note that, in this polyadic group we have $\overline{w}=w^{2-n}$.  We show that the Post cover is $F(X)$. To do this, we will use the universal characterization of the Post cover. Let $(H, \ast)$ be an ordinary group and $\beta: F_{pol}^n(X)\to \mathrm{der}^n(H, \ast)$ be a polyadic homomorphism. So, for all $w_1, \ldots w_n$, we have
$$
\beta(f(w_1, \ldots, w_n))=\beta(w_1)\ast \cdots \ast \beta(w_n).
$$
As $X\subseteq F_{pol}^n(X)$, we can restrict $\beta $ to $X$ to obtain  a map $\alpha:X\to H$. Since $F(X)$ is freely generated by $X$, there is a group homomorphism $h:F(X)\to H$ such that $h_{|_X}=\alpha$. We have
\begin{eqnarray*}
h(x_1^{\epsilon_1}\cdots x_m^{\epsilon_m})&=& \alpha(x_1)^{\epsilon_1}\ast \cdots\ast \alpha(x_m)^{\epsilon_m}\\
                                                  &=&\beta(x_1)^{\epsilon_1}\ast \cdots\ast \beta(x_m)^{\epsilon_m}.
\end{eqnarray*}
We prove that the restriction of $h$ to $F_{pol}^n(X)$ is equal to $\beta$. Note that if $w\in F_{pol}^n(X)$, then
$$
\beta(w^{2-n})=\beta(\overline{w})=\overline{\beta(w)}=\beta(w)^{2-n},
$$
because in $\mathrm{der}^n(H, \ast)$, we have $\overline{a}=a^{2-n}$. This shows that, for any positive integer $k$
$$
\beta(w)^k=(\beta(w^{2-n})\ast \beta(w)^{n-3})^k.
$$
Let $w=x_1^{\epsilon_1}\cdots x_m^{\epsilon_m}\in F_{pol}^n(X)$. For any negative exponent $\epsilon_i$, we replace $\beta(x_i)^{\epsilon_i}$ by
$$
(\beta(w^{2-n})\ast \beta(w)^{n-3})^{|\epsilon_i|}.
$$
By the fact $\sum \epsilon_i\equiv 1 (\mathrm{mod} n-1)$, the right hand side of the equality
$$
h(x_1^{\epsilon_1}\cdots x_m^{\epsilon_m})=\beta(x_1)^{\epsilon_1}\ast \cdots\ast \beta(x_m)^{\epsilon_m}
$$
becomes a product of factors of the form $\beta(w_i)$, with $w_i\in F_{pol}^n(X)$ and the number of these factors has the form $k(n-1)+1$. Since $\beta$ is a polyadic homomorphism, by this trick, we obtain the equality
$$
\beta(x_1)^{\epsilon_1}\ast \cdots\ast \beta(x_m)^{\epsilon_m}=\beta(w).
$$
To explain this process, we give an example: let $n=4$ and $X=\{ x, y\}$. Then the element $w=x^2y^{-1}xy^2$ belongs to $F_{pol}^4(X)$. We have
$$
h(w)=\beta(x)^2\ast \beta(y)^{-1}\ast \beta(x)\ast \beta(y)\ast \beta(y).
$$
If we change $\beta(y)^{-1}$ by $\beta(y^{-2})\ast \beta(y)$, we obtain
$$
h(w)=\beta(x)\ast \beta(x)\ast \beta(y^{-2})\ast \beta(y)\ast \beta(x)\ast \beta(y)\ast \beta(y).
$$
This product has $7=2\times 3+1$ factors and $x, y, y^{-2}\in F_{pol}^4(X)$. So, we have
$$
h(w)=\beta(xxy^{-2}yxyy)=\beta(w).
$$
This argument shows that the Post's cover of $F_{pol}^n(X)$ is $F(X)$, which is a free group, hence by  a result of Artamonov (see \cite{Artam}), we see that $F_{pol}^n(X)$ is also a free polyadic group over $X$. Note that, using a similar argument as above, one can directly prove the freeness of $F_{pol}^n(X)$, without the result of Artamonov.

\end{proof}

One may ask how can we express this free polyadic group $F_{pol}^n(X)$ as in the Hossz\'{u}-Gloskin's theorem. We know that
$$
F_{pol}^n(X)=\mathrm{der}_{\theta, b}(E, \cdot),
$$
for some group $(E, \cdot)$, suitable automorphism $\theta$ and some element $b$. Using Sokolov's construction (see \cite{Sok}), we can describe these objects. We know that the group $(E, \cdot)$ is in fact the retract of $F_{pol}^n(X)$ over some element $a$, i.e.
$$
E=ret_a(F_{pol}^n(X)).
$$
Since the retract is a subgroup of the Post's cover, and the Post's cover is free in this case, so $E$ is free (its rank is $(n-1)(|X|-1)+1$). By \cite{Sok}, we can see that $\theta$ is the restriction of the inner automorphism of $F(X)$ corresponding to $a^{2-n}$ to $E$. Also $b=a^{(n-1)(2-n)}$.

There is another way of description of the free polyadic group in the language $(f, ^-)$. Let $X$ be an arbitrary set and let
$$
\overline{X}=\{\overline{x}: x\in X\}
$$
be a set which is disjoint from $X$ and it is in one to one correspondence with $X$. Let $M(X\cup \overline{X})$ be the free monoid over $X\cup \overline{X}$. For $w\in M(X\cup \overline{X})$, the length of $w$ is denoted by $l(w)$. We define a new set
$$
M_p^n(X)=\{ w\in M(X\cup \overline{X}): l(w)\equiv 1(\mathrm{mod} n-1)\}.
$$
The {\it cancelation operations} on $M_p^n(X)$ consists of deleting or inserting of any piece of the form $\stackrel{(i)}{x}\overline{x}\stackrel{(n-i-1)}{x}$, where $0\leq i\leq n-1$. We say that two elements $w_1$ and $w_2$ are equivalent, if they can be transformed to each other by a finite number of cancelations. This is an equivalence relation over $M_p^n(X)$ and so we can look at its quotient set $F_p^n(X)$. Define an $n$-ary operation on $F_p^n(X)$ by
$$
f(w_1, \ldots, w_n)=w_1w_2\cdots w_n.
$$
It is not hard to see that $F_p^n(X)$ is the free polyadic group over $X$ and one can use this new description in the study of equations over polyadic groups.

\section{Presentation of the Post's cover}
The first description of the free polyadic group, enables us to give a nice connection between presentation of a polyadic group and its Post's cover. Let $u, v\in F_{pol}^n(X)$ be two polyadic words. In this case $u\approx v$ is a polyadic relation. But, since $F_{pol}^n(X)\subseteq F(X)$, so we can look at $u\approx v$ (or $uv^{-1}\approx 1$) as a group relation in the same time. In \cite{Artam}, Artamonov proved that of $(G, f)$ is a free polyadic group, then the Post's cover $(G^{\ast}, \circ)$ is a free group. This is equivalent to saying that if $(G, f)$ has the polyadic presentation $\langle X|\emptyset\rangle_{pol}$, then $(G^{\ast}, \circ)$ has presentation $\langle X|\emptyset\rangle_{gr}$. In the following theorem, we generalize this result of Artamonov.

\begin{theorem}
Let $\langle X|R\rangle_{pol}$ be a presentation for the polyadic group $(G, f)$. Then $\langle X|R\rangle_{gr}$ is a presentation for the Post's cover $(G^{\ast}, \circ)$.
\end{theorem}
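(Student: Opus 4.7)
The plan is to build mutually inverse ordinary group homomorphisms between $G^{\ast}$ and $H := \langle X \mid R\rangle_{gr}$ by exploiting the universal property of the Post cover in one direction and the universal property of the free group in the other.

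For the map $\Phi: G^{\ast} \to H$, I would first form the composition $\iota: F_{pol}^n(X) \hookrightarrow F(X) \twoheadrightarrow H$. Because the polyadic operation on $F_{pol}^n(X)$ is literally the group product in $F(X)$, this $\iota$ is a polyadic homomorphism into $\mathrm{der}^n(H,\cdot)$. Each relation $u \approx v$ in $R$ already holds in $H$ by construction, so $\iota$ respects the polyadic congruence on $F_{pol}^n(X)$ generated by $R$. It therefore descends to a polyadic homomorphism $\overline{\iota}:(G,f)\to \mathrm{der}^n(H,\cdot)$, which the universal property of the Post cover lifts to a unique group homomorphism $\Phi: G^{\ast}\to H$.

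For the reverse map $\Psi: H \to G^{\ast}$, I would use $X\subseteq G\subseteq G^{\ast}$ and the freeness of $F(X)$ to get an evaluation homomorphism $\mathrm{ev}: F(X)\to G^{\ast}$. The crux is to check that $\mathrm{ev}|_{F_{pol}^n(X)}$ coincides with the natural map $F_{pol}^n(X)\twoheadrightarrow G \hookrightarrow G^{\ast}$. The argument is that both are polyadic homomorphisms from $F_{pol}^n(X)$ into $\mathrm{der}^n(G^{\ast},\circ)$: for $\mathrm{ev}$ this is immediate (polyadic $=$ group product), and for the other map it uses property~4 of the Post cover, which says that on $G$ the polyadic operation is the restriction of $\circ$. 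Since both agree on $X$ and $F_{pol}^n(X)$ is free on $X$ as a polyadic group (Theorem~3.1), they must coincide. Consequently, for each $(u\approx v)\in R$ we have $\mathrm{ev}(u)\mathrm{ev}(v)^{-1}=1$ in $G^{\ast}$, so $\mathrm{ev}$ descends to $\Psi: H\to G^{\ast}$.

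Finally, I would observe that both $\Phi$ and $\Psi$ act as the identity on (the images of) the generating set $X$. Because $X$ polyadically generates $G$ by the presentation, and property~5 gives that $G$ generates $G^{\ast}$ as an ordinary group, $X$ generates $G^{\ast}$ as a group; and of course $X$ generates $H$ as a group. So both compositions $\Phi\circ\Psi$ and $\Psi\circ\Phi$ are identity maps. The main obstacle will be the identification carried out in Step~2 --- turning a polyadic equivalence between words of mixed positive and negative exponents into an honest ordinary-group equality inside $G^{\ast}$. Everything hinges on the simultaneous use of the freeness of $F_{pol}^n(X)$ (so that agreement on $X$ forces agreement everywhere) and property~4 of the Post cover (so that the polyadic operation on $G$ genuinely extends to the group operation $\circ$); once these two ingredients are combined, the rest of the proof is formal.
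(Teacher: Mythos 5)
Your proof is correct, but it takes a genuinely different route from the paper's. The paper never builds a map out of $G^{\ast}$; instead it takes the candidate group $M=\langle X\mid R_0\rangle_{gr}=F(X)/K$ (with $R_0=\{uv^{-1}:(u,v)\in R\}$) and verifies directly that $M$ enjoys the universal property of the Post cover: an arbitrary polyadic homomorphism $\beta:(G,f)\to\mathrm{der}^n(H,\ast)$ is first lifted through $F(X)$ (using Theorem 3.1, i.e.\ that $F(X)$ is the Post cover of $F_{pol}^n(X)$) to a group homomorphism $h_0:F(X)\to H$ with $h_0|_{F_{pol}^n(X)}=\beta\pi$, and then $h_0$ is pushed down to $M$ by the rule $h(\pi^{\prime}(w))=h_0(w)$. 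The technical heart of the paper's argument is the well-definedness of $h$, established by a height computation: every element of $K$ has height $\equiv 0 \pmod{n-1}$, so the coset decomposition $F(X)=\bigcup_{i} x^{i-1}F_{pol}^n(X)$ is respected and the problem reduces to words lying in $F_{pol}^n(X)$, where $h_0$ is already known to agree with $\beta\pi$. Your argument replaces that entire computation with an appeal to the freeness of $F_{pol}^n(X)$: two polyadic homomorphisms into $\mathrm{der}^n(G^{\ast},\circ)$ agreeing on $X$ must agree everywhere, which is exactly what is needed to see that $\mathrm{ev}$ kills $R_0$, and the rest is the standard two-maps-inverse-to-each-other scheme. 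Your version is cleaner and avoids the coset bookkeeping, at the price of invoking more of the stated structure of $G^{\ast}$ (properties 4 and 5 together with the universal property) rather than re-establishing the universal property for the presented group from scratch; both proofs ultimately rest on Theorem 3.1.
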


\begin{proof}
We have
$$
(G, f)=\frac{F_{pol}^n(X)}{[R]},
$$
where $[R]$ is the congruence generated by $R$. Let $R_0=\{ uv^{-1}; (u, v)\in R\}$ and define $M=F(X)/K$, where $K$ is the normal closure of $R_0$ in $F(X)$. Note that we have
$$
M=\langle X|R_0\rangle_{gr}=\langle X|R_0\rangle_{gr}.
$$
We prove that $M$ is the Post's cover of $(G, f)$. Let $\pi:F_{pol}^n(X)\to (G, f)$ and $\pi^{\prime}:F(X)\to M$ be canonical maps. Note that, the correspondence between $R$ and $R_0$ implies $\pi^{\prime}_{|_{F_{pol}^n(X)}}=\pi$. Let $(H, \ast)$ be an arbitrary group and
$$
\beta: (G, f)\to \mathrm{der}^n(H, \ast)
$$
be a polyadic homomorphism. We prove that there exists an ordinary homomorphism $h:M\to H$, such that $h_{|_G}=\beta$. We know that
$$
\beta\pi: F_{pol}^n(X)\to \mathrm{der}^n(H, \ast)
$$
is a polyadic homomorphism. Since $F(X)$ is the Post's cover of $F_{pol}^n(X)$, so there is a homomorphism $h_0:F(X)\to H$, such that
$$
(h_0)_{|_{F_{pol}^n(X)}}=\beta\pi.
$$
We define a map $h:M\to H$ by the rule $h(\pi^{\prime}(w))=h_0(w)$. To prove that this map is well-defined, assume that $\pi^{\prime}(w_1)=\pi^{\prime}(w_2)$. We show that $h_0(w_1)=h_0(w_2)$. Recall that there is a normal subgroup $N$ in $F(X)$ such that $F_{pol}^n(X)=xN$, for an arbitrary $x\in X$ and
$$
F(X)=\bigcup_{i=0}^{n-2}x^iN.
$$
Hence, we have also
$$
F(X)=\bigcup_{i=0}^{n-2}x^{i-1}F_{pol}^n(X).
$$
Therefore, we have $w_1=x^{i-1}w^{\prime}_1$ and $w_2=x^{j-1}w^{\prime}_2$, for some $0\leq i\leq j\leq n-2$ and $w^{\prime}_1, w^{\prime}_2\in F_{pol}^n(X)$. First, we prove that $i=j$.  For $v\in K$, we have
$$
v=\prod_k a_ku_kv^{-1}_ka^{-1}_k,
$$
where $(u_k, v_k)\in R$ and $a_k\in F(X)$. Hence, we must have 
$$
ht(v)\equiv 0(\mathrm{mod}\ n-1).
$$
Now since, $\pi^{\prime}(w_1)=\pi^{\prime}(w_2)$, we see that $x^{j-i}(w^{\prime})^{-1}_1w^{\prime}_2\in K$, and so $ht(x^{j-i})+ht(w^{\prime})^{-1}_1w^{\prime}_2)\equiv 0(\mathrm{mod}\ n-1)$. In the same time, we have $ht((w^{\prime})^{-1}_1w^{\prime}_2)\equiv 0(\mathrm{mod}\ n-1)$. This shows that $ht(x^{j-i})\equiv 0(\mathrm{mod}\ n-1)$. Hence, the number $(j-i)$ is divisible by $n-1$. But $0\leq j-i\leq n-2$, and hence $i=j$. Form $\pi^{\prime}(w_1)=\pi^{\prime}(w_2)$, we conclude that $\pi^{\prime}(w^{\prime}_1)=\pi^{\prime}(w^{\prime}_2)$. Therefore,
\begin{eqnarray*}
h_0(w_1)&=&h_0(x)^{i-1}h_0(w^{\prime}_1)\\
        &=&h_0(x)^{i-1}\beta(\pi(w^{\prime}_1))\\
        &=&h_0(x)^{i-1}\beta(\pi^{\prime}(w^{\prime}_1))\\
        &=&h_0(x)^{i-1}\beta(\pi^{\prime}(w^{\prime}_2))\\
        &=&h_0(w_2).
\end{eqnarray*}
This shows that $h$ is well-defined. It is now, easy to see that $h_{|_G}=\beta$ and so $M$ is the Post's cover of $(G, f)$.
\end{proof}

Using this theorem, one can determine the explicit form of the Post's cover for various polyadic groups. As a very simple example, consider the singleton $n$-ary group $(G, f)=\langle x|\overline{x}\approx x\rangle$. By the above theorem, $G^{\ast}=\langle x| x^{n-1}\approx 1\rangle=\mathbb{Z}_{n-1}$. As another example, consider the polyadic group $(G, f)=\langle x, y|\overline{x}\approx x\rangle$. Then, the Post's cover is
$$
G^{\ast}=\langle x, y| x^{n-1}\approx 1\rangle=\mathbb{Z}\ast \mathbb{Z}_{n-1}.
$$
As final example, consider the polyadic group
$$
(G, f)=\langle x, y| f(x, y, \stackrel{(n-2)}{x})\approx f(y, \stackrel{(n-1)}{x})\rangle.
$$
Its Post's cover is
$$
G^{\ast}=\langle x, y|xy\approx yx\rangle= \mathbb{Z}\times \mathbb{Z}.
$$

\section{Algebraic geometry over polyadic groups}
We are now ready to study equations in polyadic groups. We assume that our $n$-ary group has the form $(G, f)=\Gf$. Let $X=\{ x_1, \ldots, x_m\}$ be a set of variables and define $G[X]=G\ast F_{pol}^n(X)$, where $\ast$ denotes the free product of polyadic groups. Elements of $G[X]$ are polyadic combinations of the elements of $X$ as well as elements from $G$. We can use a similar argument as in the construction of the free polyadic group $F_{pol}^n(X)$ to investigate the structure of $G[X]$. In fact, elements of $G[X]$ have the form
$$
a_1x_{j_1}^{\epsilon_1}a_2x_{j_2}^{\epsilon_2}\cdots a_kx_{j_k}^{\epsilon_k},
$$
where $k\geq 1$, $a_1, \ldots, a_k\in G$, $x_{j_1}, \ldots, x_{j_k}\in X$, and $\sum_i\epsilon_i\equiv 1(\mathrm{mod}\ n-1)$. If we denote the $n$-ary operation of $G[X]$ by $\delta$, then we have
$$
\delta(w_1, \ldots, w_n)=w_1w_2\cdots w_n.
$$
In the special case when all $w_i\in G$ we have
$$
\delta(w_1, \ldots, w_n)=f(w_1, \ldots, w_n).
$$
As is the previous section, we can prove that the Post's cover of $G[X]$ is the ordinary free product $G^{\ast}\ast F(X)$, where $G^{\ast}$ is the Post's cover of $(G, f)$.

Any element $p$ in this group depends on the variables $x_1, \ldots, x_m$, therefore,  we can denote it by $p=p(x_1, \ldots, x_m)$. We call such an element a {\em polyadic term}.  A {\em polyadic equation with coefficients from $G$} is a formal expression
$$
p(x_1, \ldots, x_m)\approx q(x_1, \ldots, x_m),
$$
where both $p$ and $q$ are polyadic terms with coefficients from $G$. An element $(g_1, \ldots, g_m)\in G^m$ is a {\em solution} of this equation, if the equality
$$
p(g_1, \ldots, g_m)= q(g_1, \ldots, g_m)
$$
is true in $G$. A {\em system of equations} is a set $S$ of polyadic equations. If $S$ is an arbitrary system of equations, we call its solution set
$$
V_G(S)=\{(g_1, \ldots, g_m)\in G^m: \forall (p\approx q)\in S\ \   p(g_1, \ldots, g_m)=q(g_1, \ldots, g_m)\}
$$
the corresponding {\em algebraic set}. For an arbitrary subset $Y\subseteq G^m$, its {\em radical} is a congruence of $G[X]$ consisting of all pairs
$(p, q)\in G[X]^2$ such that for all $(g_1, \ldots, g_m)\in Y$ we have $p(g_1, \ldots, g_m)=q(g_1, \ldots, g_m)$. We denote this radical by $\mathrm{Rad}(Y)$. The quotient polyadic group $\Gamma(Y)=G[X]/\mathrm{Rad}(Y)$ is called the {\em coordinate polyadic group} of $Y$. We say that $(G, f)$ is {\em equational noetherian}, if for any system $S$ of polyadic equations, there exists a finite subsystem $S_0$ such that $V_G(S)=V_G(S_0)$. For general properties of equational noetherian algebraic systems, see \cite{DMR2}.

One can define a topology on $G^m$ using algebraic sets as a prebase: in this topology every closed set is an arbitrary intersection of finite unions of algebraic sets. It is called the {\em Zariski topology} on $G^m$. An algebraic set is called {\em irreducible}, if it can not be covered non-trivially by two other closed sets. Two algebraic sets $Y_1$ and $Y_2$ are isomorphic, if there is a polyadic term bijection between them whose inverse is also a polyadic term map. The classification of algebraic sets up to isomorphism, determines the geometric properties of $(G, f)$. It can be shown that $Y_1$ and $Y_2$ are isomorphic, if and only if,  they have the same coordinate polyadic groups. Therefore, it is very important to determine the structure of polyadic groups which are the coordinate polyadic groups of algebraic sets in $G$. The {\em unification theorems} of \cite{DMR2} provide strong tools to do this. Here, we give two versions of the unification theorems which we can use to determine coordinate polyadic groups. For details, see \cite{DMR2}.

\begin{theorem}
Let $(G, f)$ be an equational noetherian polyadic group. For a finitely generated polyadic group $H$, the following conditions are equivalent.\\

1- $H$ embeds in a direct power of $(G, f)$.\\

2- $H$ satisfies all quasi-identities which are true in $(G, f)$.\\

3- $H$ is residually $(G, f)$-polyadic group.\\

4- $H$ is a coordinate polyadic group of some algebraic set in $(G, f)$.
\end{theorem}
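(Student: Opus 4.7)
The plan is to reduce the statement to the general Unification Theorem of \cite{DMR2} by verifying that polyadic groups, viewed in the algebraic language of Section~2, fit into that universal framework. Polyadic groups form a variety, axiomatized by D\"ornte's identities together with the associativity laws, so congruences, quasi-identities, free objects, radicals and coordinate algebras are literally those of \cite{DMR2}. The construction $G[X]$ of the previous section supplies the free polyadic $G$-group on $X$, playing the role of the free algebra with constants from $G$.

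I would prove the cycle $(1)\Rightarrow(2)\Rightarrow(3)\Rightarrow(4)\Rightarrow(1)$. The implication $(1)\Rightarrow(2)$ is formal, since quasi-identities are preserved under subalgebras and direct products, so an embedding $H\hookrightarrow (G,f)^I$ automatically transfers all quasi-identities of $(G,f)$ to $H$. The implication $(4)\Rightarrow(1)$ is also direct: the coordinate polyadic group $\Gamma(Y)=G[X]/\mathrm{Rad}(Y)$ embeds diagonally into $(G,f)^Y$ via the evaluation map $p\mapsto (p(\overline{g}))_{\overline{g}\in Y}$, and Theorem~2.2 (together with the definition of $\mathrm{Rad}(Y)$) guarantees that each evaluation is a genuine polyadic $G$-homomorphism.

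The real content lies in the two middle implications. For $(2)\Rightarrow(3)$, given $h_1\ne h_2$ in $H$ with generating tuple $(h^{(1)},\ldots,h^{(k)})$, I would write $h_i=p_i(h^{(1)},\ldots,h^{(k)})$ for polyadic $G$-terms $p_1,p_2$, present $H$ as $G[x_1,\ldots,x_k]/[R]$, and consider the formula $\bigwedge_{(u,v)\in R}(u\approx v)\to (p_1\approx p_2)$. If no $G$-homomorphism $H\to (G,f)$ separates $h_1$ from $h_2$, then this implication holds in $(G,f)$; equational noetherianity now enters decisively, allowing the infinite conjunction in the premise to be replaced by a finite subconjunction and thus producing a genuine quasi-identity. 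By hypothesis (2) it is also valid in $H$, forcing $h_1=h_2$, a contradiction. For $(3)\Rightarrow(4)$, I would take
\[
Y=\{(\phi(h^{(1)}),\ldots,\phi(h^{(k)}))\in G^k : \phi\in\mathrm{Hom}_G(H,(G,f))\},
\]
and check that residuality forces the kernel of the canonical map $G[x_1,\ldots,x_k]\to H$ to coincide with $\mathrm{Rad}(Y)$, whence $H\cong \Gamma(Y)$.

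The main obstacle is the step $(2)\Rightarrow(3)$: this is the only place where the equational noetherian hypothesis on $(G,f)$ is used in an essential way, and without it the premise of the quasi-identity is an infinite conjunction that does not constitute a first-order formula, so the argument collapses. A secondary subtlety, present in both $(3)\Rightarrow(4)$ and $(4)\Rightarrow(1)$, is the need to keep the constants from $G$ fixed throughout: all homomorphisms involved must be $G$-homomorphisms, and Theorem~2.2 is the natural tool for controlling these in the polyadic setting by relating them to homomorphisms of the corresponding ordinary groups.
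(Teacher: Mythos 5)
Your proposal is correct, but it does something the paper deliberately does not: it actually proves the theorem. In the paper this statement is quoted verbatim from the general unification theorems of \cite{DMR2} (``Here, we give two versions of the unification theorems\dots For details, see \cite{DMR2}''); the only polyadic-specific work the authors do is the setup that makes the citation legitimate, namely that polyadic groups form a variety axiomatized by the D\"ornte and associativity identities in the language $(f,\ ^{-})$, and that $G[X]=G\ast F_{pol}^n(X)$ is the free object with coefficients, which is exactly the reduction you state in your opening paragraph. Your cycle $(1)\Rightarrow(2)\Rightarrow(3)\Rightarrow(4)\Rightarrow(1)$ is the standard argument behind that general theorem and all four steps are sound: the preservation of quasi-identities under subalgebras and powers, the diagonal evaluation embedding $\Gamma(Y)\hookrightarrow (G,f)^Y$, the truncation of the infinite premise $\bigwedge_{(u,v)\in R}(u\approx v)$ to a finite one via $V_G(R)=V_G(R_0)$, and the identification $\mathrm{Rad}(Y)=[R]$ from residuality. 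You also correctly isolate $(2)\Rightarrow(3)$ as the only place where the noetherian hypothesis is essential. What the paper's route buys is brevity and the reassurance that nothing polyadic-specific is being used; what yours buys is a self-contained proof. The one point worth tightening is the bookkeeping of coefficients, which the paper itself also glosses over: if equations carry constants from $G$, then in conditions (1)--(3) the group $H$ should be a $G$-polyadic group, the embeddings and separating maps should be $G$-homomorphisms, and the quasi-identities are formulas with parameters from $G$; your invocation of Theorem~2.2 to control these homomorphisms is a reasonable way to handle this, though strictly speaking it is only needed later (in Theorems~5.2 and~5.4) when one wants the explicit form of the subgroups of $\mathrm{der}_{\hat{\theta},(b)}(G^I,\cdot)$, not for the unification theorem itself.
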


Hence, applying this theorem, we can determine the class of all finitely generated polyadic groups which are the coordinate polyadic groups of algebraic sets in $G$. Note that, if $I$ is an arbitrary index set then
$$
(G, f)^I=\mathrm{der}_{\hat{\theta}, (b)}(G^I, \cdot),
$$
where $\hat{\theta}:G^I\to G^I$ is the automorphism induced by $\theta$,
$$
\hat{\theta}((g_i)_{i\in I})=(\theta(g_i))_{i\in I},
$$
and $(b)$ is the constant $I$-sequence with all terms equal to $b$. So, by the above unification theorem, we must determine all finitely generated polyadic subgroups of $\mathrm{der}_{\hat{\theta}, (b)}(G^I, \cdot)$. For this purpose, we need some facts from our earlier work \cite{Khod-Shah2}. For an arbitrary polyadic group $(G, f)=\Gf$, we  determine its all polyadic subgroups as follows.
For $u\in G$, define a new binary operation on $G$ by $x\ast y=xu^{-1}y$. Then $(G, \ast)$ is an isomorphic copy of $(G,\cdot)$ and the isomorphism is the map $x\mapsto xu$. We denote this new group by $G_u$. Its identity is $u$ and the inverse of $x$ is $ux^{-1}u$. We define an automorphism of $G_u$ by $\psi_u(x)=u\theta(x)\theta(u^{-1})$. It can be easily checked that this is actually an automorphism of $G_u$.

Suppose $H\leq (G, f)$. We denote the restriction of $f$ to $H$ by $f$, so there is a binary operation on $H$, say $\ast$, an automorphism $\psi$ and an element $c\in H$, such that
$$
(H, f)=\mathrm{der}_{\psi, c}(H, \ast).
$$
The inclusion map $j:H\to G$ is a polyadic homomorphism, hence by \cite{Khod-Shah}, there is an element $u\in G$ and an ordinary homomorphism $\phi:(H, \ast)\to (G,\cdot)$, with the properties\\

{\em 1- $j=R_u\phi$,\\

2-  $f(\stackrel{(n)}{u})=\phi(c)u$,\\

3- $\phi\psi=I_u\theta\phi$}.\\

Here $I_u$ is the inner automorphism corresponding to $u$. From 1, we deduce that for any $x\in H$, $\phi(x)=xu^{-1}$, and so by 2, we have $f(\stackrel{(n)}{u})=c$. Moreover, since $\phi$ is an ordinary homomorphism, so using $\phi(x\ast y)=\phi(x)\phi(y)$ and $\phi(x)=xu^{-1}$, we obtain $x\ast y=xu^{-1}y$. Finally, by 3, we have $\psi(x)=u\theta(x)\theta(u^{-1})$, and therefore we must have $(H, \ast)\leq G_u$. Further, $H$ is invariant under $\psi_u$ and hence $\psi=\psi_u|_H$. So, we proved that  $H$ is a polyadic subgroup of $(G, f)$ iff there exists an element $u$ such that $H$ is a $\psi_u$-invariant subgroup of $G_u$.

Having determined the structure of polyadic subgroup, we now apply the first unification theorem, to obtain the following result. The proof is now trivial, because we just determine the polyadic subgroups of $\mathrm{der}_{\hat{\theta}, (b)}(G^I, \cdot)$

\begin{theorem}
Let $H$ be a coordinate polyadic group of some algebraic set in $(G, f)$. Let $(G, f)$ be equational noetherian. Then there exist a set $I$ and an element $u=(u_i)_{i\in I}\in G^I$ such that\\

1- $H$ is an ordinary subgroup of $(G^I)_u$.\\

2- For all $(h_i)_{i\in I}\in H$, we have $(u_i\theta(h_i)\theta(u_i^{-1}))_{i\in I}\in H$.\\

The converse is also true for all finitely generated polyadic groups having properties 1 and 2.
\end{theorem}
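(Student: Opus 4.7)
The plan is to combine Theorem 5.1 (the first unification theorem) with the description of polyadic subgroups given just before the statement, applied to the polyadic group $(G,f)^I = \mathrm{der}_{\hat{\theta},(b)}(G^I,\cdot)$. By Theorem 5.1, $H$ is a coordinate polyadic group of an algebraic set in $(G,f)$ if and only if $H$ embeds as a polyadic subgroup of a direct power $(G,f)^I$. So the whole task is to translate this embedding condition into the explicit pair of conditions 1 and 2.

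First I would observe that a direct power of $(G,f)$ is again a polyadic group of Hossz\'u-Gloskin form, namely
\[
(G,f)^I = \mathrm{der}_{\hat{\theta},(b)}(G^I,\cdot),
\]
with $\hat{\theta}$ acting componentwise and $(b)$ the constant $I$-tuple. Next I would apply the structural description recalled right before the theorem: a subset $H$ is a polyadic subgroup of $\mathrm{der}_{\hat{\theta},(b)}(G^I,\cdot)$ if and only if there exists an element $u = (u_i)_{i\in I} \in G^I$ such that $H$ is an ordinary subgroup of $(G^I)_u$ which is, moreover, invariant under the twisted automorphism
\[
\psi_u(x) = u \cdot \hat{\theta}(x) \cdot \hat{\theta}(u^{-1}).
\]
Reading $\psi_u$ componentwise gives exactly $(u_i \theta(h_i)\theta(u_i^{-1}))_{i\in I}$, which is condition 2, while membership in $(G^I)_u$ is condition 1.

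For the forward direction, if $H$ is a coordinate polyadic group of an algebraic set, then since $(G,f)$ is equational noetherian, Theorem 5.1 gives an embedding $H \hookrightarrow (G,f)^I$, and the subgroup description yields the required $u$ together with conditions 1 and 2. For the converse, if $H$ is finitely generated and satisfies 1 and 2, then the same subgroup description shows $H$ is a polyadic subgroup of $(G,f)^I$, and applying Theorem 5.1 in the opposite direction identifies $H$ as the coordinate polyadic group of some algebraic set in $(G,f)$.

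The only non-routine point is verifying that the componentwise form of $\psi_u$ is precisely what the statement records in condition 2, and that the binary operation of $(G^I)_u$ indeed has identity $u$ and reduces to the operation $x\ast y = x\cdot u^{-1}\cdot y$ componentwise; both are immediate from the construction of $G_u$ recalled in the paragraph preceding the theorem. No genuine obstacle is expected, since the author has already done the substantive work in Theorem 5.1 and in the analysis of polyadic subgroups of $\Gf$; the theorem is an assembly of these two inputs with the bookkeeping of a free index set $I$.
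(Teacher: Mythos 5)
Your proposal is correct and follows exactly the paper's route: the paper likewise declares the proof ``trivial'' as an assembly of the first unification theorem with the identification $(G,f)^I=\mathrm{der}_{\hat{\theta},(b)}(G^I,\cdot)$ and the preceding characterization of polyadic subgroups as $\psi_u$-invariant subgroups of $G_u$. Your componentwise reading of $\psi_u$ is precisely the intended content of condition 2, so nothing is missing.
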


Note that, by the above unification theorem,  finitely generated polyadic groups described by 1 and 2 in this theorem are the only finitely generated elements of the quasi-variety generated by $(G, f)$. They are also the only finitely generated residually $(G, f)$-polyadic groups.  The second unification theorem gives a description of the coordinate polyadic groups of irreducible algebraic sets.

\begin{theorem}
Let $(G, f)$ be an equational noetherian polyadic group. For a finitely generated polyadic group $H$, the following conditions are equivalent.\\

1- $H$ embeds in an ultra-power of $(G, f)$.\\

2- $H$ satisfies all universal sentences  which are true in $(G, f)$.\\

3- $H$ is fully residually $(G, f)$-polyadic group.\\

4- $H$ is a coordinate polyadic group of some irreducible algebraic set in $(G, f)$.
\end{theorem}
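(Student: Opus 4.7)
The plan is to establish the cycle of equivalences in the order $(1)\Leftrightarrow(2)\Leftrightarrow(3)$ and then $(3)\Leftrightarrow(4)$, mirroring the scheme used for the first unification theorem in \cite{DMR2} and verifying that the polyadic setting supplies the needed ingredients (free products $G[X]$, evaluation of polyadic terms at points of $G^m$, and the first unification theorem itself).

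The equivalence $(1)\Leftrightarrow(2)$ is pure model theory and does not use the equational noetherian hypothesis: by \L o\'s's theorem, every universal sentence true in $(G,f)$ holds in every ultra-power and in every subalgebra, giving $(1)\Rightarrow(2)$; conversely, if $H$ has the universal theory of $(G,f)$, I would index by the directed set of finite conjunctions of inequations satisfied in $H$, send each such finite system to a separating tuple in $(G,f)$ (existence of which is forced by the universal theory), and use a suitable ultrafilter to glue these tuples into an embedding $H\hookrightarrow (G,f)^I/\mathcal{U}$. For $(2)\Leftrightarrow(3)$, fix a generating tuple $(h_1,\dots,h_m)$ of the finitely generated polyadic group $H$; full residuality says exactly that every existential formula $\exists\bar x\,\bigwedge_i(p_i(\bar x)\not\approx q_i(\bar x))$ satisfied in $H$ by $(h_1,\dots,h_m)$ also has a solution in $(G,f)$, which is the contrapositive of agreement of the universal theories.

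For $(4)\Rightarrow(3)$, suppose $H=\Gamma(Y)$ with $Y$ irreducible, and let $(p_1,q_1),\dots,(p_k,q_k)$ be pairs of polyadic terms representing distinct elements of $H$. Then each $Y_i=Y\cap V_G(p_i\approx q_i)$ is a proper closed subset of $Y$; by irreducibility $Y\neq Y_1\cup\cdots\cup Y_k$, so any point $g$ in the complement yields, through evaluation $p(\bar h)\mapsto p(g)$, a polyadic homomorphism $H\to(G,f)$ simultaneously separating all $k$ pairs.

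The hard step is $(3)\Rightarrow(4)$. First I invoke the first unification theorem to produce an algebraic set $Z\subseteq G^m$ with $H\cong\Gamma(Z)$. Since $(G,f)$ is equational noetherian, the Zariski topology on $G^m$ is noetherian, so $Z$ admits a finite irreducible decomposition $Z=Z_1\cup\cdots\cup Z_s$ into proper irreducible closed subsets. The radicals then satisfy $\mathrm{Rad}(Z)=\bigcap_{j}\mathrm{Rad}(Z_j)$, which yields a subdirect embedding $H\hookrightarrow\prod_{j=1}^{s}\Gamma(Z_j)$. The crux is to show that one projection $H\to\Gamma(Z_{j_0})$ is injective, for then $H=\Gamma(Z_{j_0})$ with $Z_{j_0}$ irreducible. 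If every projection had nontrivial kernel, I would choose for each $j$ a pair $(a_j,b_j)$ with $a_j\neq b_j$ in $H$ but $a_j=b_j$ in $\Gamma(Z_j)$; applying the fully residually hypothesis to these $s$ pairs produces $\phi\colon H\to(G,f)$ separating all of them, realized as evaluation at some $g\in Z$; but $g$ must lie in some $Z_{j_0}$, where $a_{j_0}=b_{j_0}$ forces $\phi(a_{j_0})=\phi(b_{j_0})$, a contradiction. The main technical obstacle I anticipate is cleanly establishing the noetherianity of the Zariski topology on $G^m$ and the resulting finite irreducible decomposition in the polyadic setting; this should follow by the standard noetherian-space argument from the descending chain condition on radicals, which in turn is a direct consequence of equational noetherianity of $\Gf$ already established in the paper.
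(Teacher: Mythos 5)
First, a point of reference: the paper does not prove this theorem at all. It is quoted as the second of the general unification theorems of Daniyarova--Myasnikov--Remeslennikov, with the reader referred to \cite{DMR2}; since polyadic groups form an equational class in the language of polyadic groups (the associativity and D\"ornte identities), the general theorem applies verbatim. Your reconstruction follows the standard architecture of that general proof, and two of your four blocks are sound: the argument for $(4)\Rightarrow(3)$ via irreducibility and the finite union $Y_1\cup\dots\cup Y_k$ is exactly right, and the skeleton of $(3)\Rightarrow(4)$ (first unification theorem, noetherian Zariski topology, decomposition into irreducible components, $\mathrm{Rad}(Z)=\bigcap_j\mathrm{Rad}(Z_j)$, one injective projection) is the standard one, provided you also verify that the irreducible components of an algebraic set are themselves algebraic and that $G$-homomorphisms $\Gamma(Z)\to (G,f)$ are precisely the evaluations at points of $Z$ (which holds because $V(\mathrm{Rad}(Z))=Z$).

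The genuine gap is in your treatment of $(2)\Leftrightarrow(3)$ and, less seriously, of $(2)\Rightarrow(1)$. You assert that full residuality ``says exactly'' that every existential formula $\exists\bar x\,\bigwedge_i(p_i(\bar x)\not\approx q_i(\bar x))$ true of the generators of $H$ has a solution in $(G,f)$. It does not: a discriminating map must be a homomorphism $H\to(G,f)$, so the witnessing tuple in $G^m$ must in addition satisfy every relation $r(\bar x)\approx s(\bar x)$ holding among the generators of $H$, and there may be infinitely many such relations, so the relevant ``formula'' is not first order. This is precisely where the equational noetherian hypothesis is indispensable: it lets one replace the full set of relations of $H$ by a finite subsystem with the same solution set in $G$, after which your contrapositive argument goes through. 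As written, your proof of $(2)\Rightarrow(3)$ never invokes equational noetherianity, and that implication is false without it --- this is the only reason the hypothesis appears in the statement, since $(1)\Leftrightarrow(2)$, $(3)\Rightarrow(1)$ and $(4)\Rightarrow(3)$ all hold unconditionally. The same omission of the positive diagram affects your ultrafilter construction for $(2)\Rightarrow(1)$: you must index over finite fragments of the whole quantifier-free diagram (equations together with inequations), not just the inequations, or the resulting map into the ultrapower need not be a homomorphism; there the repair costs nothing, since each equation is handled coordinatewise on an ultrafilter-large set of indices.
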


Let $I$ be a set and $F$ be an ultra-filter over $I$. For the polyadic group $(G, f)=\Gf$, one can easily checks that
$$
\frac{(G, f)^I}{F}=\mathrm{der}_{\theta^{\ast}, (b)/F}(\frac{G^I}{F}, \cdot),
$$
where $\theta^{\ast}:G^I/F\to G^I/F$ is the automorphism
$$
\theta^{\ast}(\frac{(g_i)_{i\in I}}{F})=\frac{(\theta(g_i))_{i\in I}}{F},
$$
and $(b)/F$ is the equivalence class of the $I$-sequence $(b)$. Again, we can apply 4.2 to determine the structure of coordinate polyadic groups of irreducible algebraic sets over $(G, f)$.

\begin{theorem}
Let $H$ be a coordinate polyadic group of some irreducible algebraic set in $(G, f)$. Let $(G, f)$ be equational noetherian. Then there exist a set $I$, an ultra-filter $F$ over $I$ and an element $u=(u_i)/F\in G^I/F$ such that\\

1- $H$ is an ordinary subgroup of $(G^I/F)_u$.\\

2- For all $(h_i)/F\in H$, we have $(u_i\theta(h_i)\theta(u_i^{-1}))/F\in H$.\\

The converse is also true for all finitely generated polyadic groups having properties 1 and 2.
\end{theorem}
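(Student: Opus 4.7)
The plan is to mirror the proof of Theorem 5.3, replacing the direct power $(G,f)^I$ with the ultra-power $(G,f)^I/F$ and using Theorem 5.2 (the irreducible version of the unification theorem) in place of Theorem 5.1. First I would invoke Theorem 5.2: since $(G,f)$ is equational noetherian, a finitely generated polyadic group $H$ is the coordinate polyadic group of some irreducible algebraic set over $(G,f)$ if and only if $H$ embeds as a polyadic subgroup of some ultra-power $(G,f)^I/F$. Thus the task reduces to describing the polyadic subgroups of $(G,f)^I/F$.

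Next, I would apply the ultra-power identity recorded just before the statement, namely
\[
\frac{(G,f)^I}{F} = \mathrm{der}_{\theta^{\ast},\,(b)/F}\!\left(\frac{G^I}{F},\cdot\right),
\]
which puts the ambient polyadic group in Hossz\'{u}-Gloskin form with underlying ordinary group $G^I/F$, distinguished element $(b)/F$, and automorphism $\theta^{\ast}$ acting componentwise modulo $F$. The structural description of polyadic subgroups developed just before Theorem 5.3 applies verbatim: a polyadic subgroup $H$ of $\mathrm{der}_{\psi,c}(K,\cdot)$ is the same thing as a $\psi_u$-invariant ordinary subgroup of the twist $K_u$ for some $u\in K$, where $x\ast_u y = xu^{-1}y$ and $\psi_u(x) = u\,\psi(x)\,\psi(u^{-1})$. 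Specializing $K=G^I/F$, $\psi=\theta^{\ast}$, $c=(b)/F$ produces an element $u=(u_i)/F$ such that condition 1 of the theorem holds, while the $\psi_u$-invariance, after unpacking $\theta^{\ast}((u_i)/F)=(\theta(u_i))/F$ componentwise, becomes exactly condition 2.

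For the converse, suppose $H$ is a finitely generated ordinary subgroup of $(G^I/F)_u$ satisfying condition 2. Condition 2 is precisely the invariance of $H$ under $\psi_u$ in the ultra-power setup, so by the same structural description $H$ is a polyadic subgroup of $\mathrm{der}_{\theta^{\ast},(b)/F}(G^I/F,\cdot) = (G,f)^I/F$. Consequently $H$ embeds in an ultra-power of $(G,f)$, and Theorem 5.2 then yields that $H$ is the coordinate polyadic group of some irreducible algebraic set in $(G,f)$.

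The argument is essentially a bookkeeping exercise once the ingredients are assembled, and no substantial obstacle arises beyond what was already handled in Theorem 5.3; the only point that requires a moment of care is checking that the componentwise action of $\theta^{\ast}$ on a representative $(u_i)_{i\in I}$ passes cleanly to the quotient $G^I/F$, so that the formula $\psi_u(x)=u\theta^{\ast}(x)\theta^{\ast}(u^{-1})$ unpacks into the stated componentwise-mod-$F$ condition. The finite generation hypothesis is used only to invoke Theorem 5.2 in the converse direction.
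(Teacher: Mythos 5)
Your proposal is correct and is essentially the argument the paper intends: the paper gives no separate proof for this theorem, relying (exactly as you do) on the second unification theorem, the identity $(G,f)^I/F=\mathrm{der}_{\theta^{\ast},(b)/F}(G^I/F,\cdot)$, and the characterization of polyadic subgroups of $\mathrm{der}_{\theta,b}(G,\cdot)$ as $\psi_u$-invariant subgroups of $G_u$. The only discrepancies are cosmetic: your theorem numbers are permuted relative to the paper's, and finite generation is in fact needed to invoke the unification theorem in both directions, not only in the converse.
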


Again, we see that finitely generated polyadic groups described by 1 and 2 in this theorem are the only finitely generated elements of the universal class generated by $(G, f)$. They are also the only finitely generated fully residually $(G, f)$-polyadic groups.

\section{Equational noetherian polyadic groups}
In this section, we show that there is a close connection between algebraic geometries  of the polyadic group $(G, f)=\Gf$ and the ordinary group $(G, \cdot)$.

Let $\mathcal{L}$ be the language of groups, consisting of one binary symbol for product, one unary symbol for inverse and all elements of $G$ as constants, $\mathcal{L}_{\theta}$ be the same language but having $\theta$ as the second  unary operation. Assume that also $\mathcal{L}_p^n$ is the language of polyadic groups having one $n$-ary operation $f$, one unary operation $^-$, and elements of $G$ as constants.

\begin{theorem}
The polyadic group $(G, f)$ is equational noetherian, if and only if, the Post's cover $(G^{\ast}, \circ)$ is equational noetherian, if and only if $(G, \cdot)$ is equational noetherian.
\end{theorem}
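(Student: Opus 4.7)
The plan is to prove the cycle of implications
$$(G,f)\ \mathrm{EN}\ \Longrightarrow\ (G,\cdot)\ \mathrm{EN}\ \Longrightarrow\ (G^{\ast},\circ)\ \mathrm{EN}\ \Longrightarrow\ (G,f)\ \mathrm{EN},$$
together with the easy direction $(G^{\ast},\circ)\ \mathrm{EN}\Rightarrow(G,\cdot)\ \mathrm{EN}$. Two structural facts will drive the argument. First, the retract $\mathrm{ret}_{a}(G,f)=(G,\ast)$ is isomorphic to $(G,\cdot)$ via an explicit Hossz\'u--Gloskin map $\phi$, and its binary operation $x\ast y=f(x,\stackrel{(n-2)}{a},y)$ is visibly a polyadic term. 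Second, inside the Post's cover $(G^{\ast},\circ)$ the subgroup $R$ is normal of finite index $n-1$, is isomorphic to a retract of $(G,f)$ (hence to $(G,\cdot)$), and the polyadic product agrees with the group product: $f(x_{1}^{n})=x_{1}\circ\cdots\circ x_{n}$.

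For the implication $(G,f)\ \mathrm{EN}\Rightarrow(G,\cdot)\ \mathrm{EN}$, I transport a system $T=\{w_{i}(\underline x)=c_{i}\}$ of ordinary group equations over $(G,\cdot)$ through $\phi$ into a system $T^{\ast}$ using the retract product; because $\ast$ unfolds into $f(\cdot,\stackrel{(n-2)}{a},\cdot)$, $T^{\ast}$ is a polyadic system over $(G,f)$. A finite equivalent subsystem of $T^{\ast}$ supplied by equational noetherianity of $(G,f)$ pulls back through the bijection $\phi$ to a finite equivalent subsystem of $T$. For $(G^{\ast},\circ)\ \mathrm{EN}\Rightarrow(G,f)\ \mathrm{EN}$, every polyadic term in $G[X]$ is already an ordinary group word in $G^{\ast}\ast F(X)$, so a polyadic equation $p\approx q$ coincides with the group equation $p\circ q^{-1}=1$ and $V_{G}(p\approx q)=V_{G^{\ast}}(p\circ q^{-1}=1)\cap G^{m}$; a finite equivalent subsystem over $G^{\ast}$ therefore restricts to a finite equivalent subsystem over $G$. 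The step $(G^{\ast},\circ)\ \mathrm{EN}\Rightarrow(G,\cdot)\ \mathrm{EN}$ is the general fact that equational noetherianity passes to an arbitrary subgroup with its own constants, since every $R$-system is a $G^{\ast}$-system and $V_{R}(S)=V_{G^{\ast}}(S)\cap R^{m}$.

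The substantive direction is $(G,\cdot)\ \mathrm{EN}\Rightarrow(G^{\ast},\circ)\ \mathrm{EN}$, i.e., the transfer of equational noetherianity from a finite-index normal subgroup to the whole group. Fix a coset representative $g_{0}\in G$, so that $G^{\ast}=\bigsqcup_{k=0}^{n-2}g_{0}^{\,k}R$. Given a system $S=\{w_{i}(x_{1},\dots,x_{m})=1\}$ over $G^{\ast}$, I decompose it by the cosets of each variable: writing $x_{j}=g_{0}^{\,k_{j}}r_{j}$ with $r_{j}\in R$, normality of $R$ allows me to pull all $g_{0}$-powers to the left, so in each of the $(n-1)^{m}$ cases $(k_{1},\dots,k_{m})$ the equation $w_{i}=1$ becomes either infeasible (if the total $g_{0}$-exponent is nonzero modulo $n-1$) or an ordinary equation $\widetilde w_{i}=1$ over $R$ with $R$-coefficients obtained by conjugating by suitable powers of $g_{0}$. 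Applying equational noetherianity of $R\cong(G,\cdot)$ in each case produces a finite equivalent subfamily of the induced $R$-equations; pulling back the finitely many indices used across all cases gives a single finite $S_{0}\subseteq S$ with $V_{G^{\ast}}(S_{0})=V_{G^{\ast}}(S)$.

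The main obstacle is exactly this coset-decomposition. I will need to (a) verify that the rewriting of $w_{i}=1$ as an equation over $R$ is uniform in $i$, (b) check that the finite reductions obtained separately in each of the $(n-1)^{m}$ cases amalgamate into one finite subfamily of indices of $S$, and (c) reconcile the conjugation of $R$-coefficients by $g_{0}^{\,k}$ with the language of $R$-constants used to speak of equational noetherianity of $R$. The other three implications are formal consequences of Hossz\'u--Gloskin, Post's coset theorem, and the description of $G[X]$ and its Post cover given earlier in the paper.
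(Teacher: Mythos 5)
Your overall architecture coincides with the paper's for three of the four implications: polyadic equations over $(G,f)$ are literally group equations over $G^{\ast}$ (so $G^{\ast}$ EN implies $(G,f)$ EN); $(G,\cdot)$ sits inside $G^{\ast}$, up to isomorphism, as the finite-index normal subgroup $R$ (so $G^{\ast}$ EN implies $(G,\cdot)$ EN); and the retract identity $u\cdot v=f(u,\stackrel{(n-2)}{a},v)$ together with the polyadic formula for inversion turns any $\mathcal{L}$-system into an $\mathcal{L}_p^n$-system (so $(G,f)$ EN implies $(G,\cdot)$ EN). The one place you genuinely diverge is the implication $(G,\cdot)$ EN $\Rightarrow(G^{\ast},\circ)$ EN: the paper simply invokes the theorem of Baumslag--Myasnikov--Romankov \cite{BMRom} that a finite extension of an equationally noetherian group is equationally noetherian, using $[G^{\ast}:R]=n-1$, whereas you set out to reprove that theorem by coset decomposition. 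That choice makes the proof self-contained, but it is also exactly where your sketch has a gap.

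The gap is your obstacle (c), and it is more serious than your phrasing suggests. After substituting $x_j=g_0^{k_j}r_j$ and pushing the powers of $g_0$ to the left, it is not only the coefficients that get conjugated: each \emph{occurrence} of a variable $r_j$ is conjugated by $g_0^{P}$, where $P$ is the accumulated exponent of the prefix, and the same $r_j$ typically appears conjugated by different powers at different positions of the word. So $\widetilde w_i=1$ is not an equation over $R$ in the variables $r_1,\dots,r_m$ in the group language with $R$-constants; it is an equation in that language enlarged by the automorphisms $r\mapsto r^{g_0^{k}}$ of $R$, and equational noetherianity in such an enlarged language is precisely the kind of statement the paper's Corollary 6.2 \emph{deduces} from this theorem, so it cannot be assumed. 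A standard repair: introduce auxiliary variables $y_{j,k}$ standing for $r_j^{g_0^{k}}$ with $0\le k\le n-2$ (higher powers reduce modulo $n-1$ up to an inner automorphism of $R$ by the constant $g_0^{n-1}\in R$), so each $\widetilde w_i=1$ becomes an honest $R$-equation in $m(n-1)$ variables; equational noetherianity of $R$ yields a finite subsystem with the same solution set in $R^{m(n-1)}$, and pulling back along the injective map $(r_j)_j\mapsto(r_j^{g_0^{k}})_{j,k}$ --- whose image need not be algebraic, but injectivity plus equality of the two algebraic sets is all that is needed --- gives the finite subfamily for that coset pattern. With that repair your amalgamation over the $(n-1)^m$ patterns in (b) goes through as you describe; alternatively, simply cite \cite{BMRom} as the paper does.
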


\begin{proof}
For arbitrary elements $a_1, \ldots, a_n\in G$, we have inside $(G^{\ast}, \circ)$,
$$
f(a_1, \ldots, a_n)=a_1\circ a_2\circ\cdots\circ a_n,
$$
so, every polyadic equation with coefficients from $G$ is in the same time a group equation with coefficients from $G^{\ast}$. Therefore, if $G^{\ast}$ is equational noetherian, then so is the polyadic group $(G, f)$. Also, since $(G, \cdot)$ is a subgroup of $G^{\ast}$, so $(G, \cdot)$ is equational noetherian. Now, suppose $(G, f)$ is equational noetherian. First, we show that $(G, \cdot)$ is equational noetherian. We know that there exists an element $a\in G$ such that $(G, \cdot)=ret_a(G, f)$, hence for arbitrary elements $u, v\in G$ we have $u\cdot v=f(u,\stackrel{(n-2)}{a}, v)$. Also, we have $u^{-1}=f(\overline{a}, \stackrel{(n-3)}{u}, \overline{u}, \overline{a})$. This observation shows that every group equation in the language $\mathcal{L}$ can be transformed into an equation in the language $\mathcal{L}_p^n$. For example, consider the ordinary equation $bx^2y^{-1}cx\approx 1$. We have
\begin{eqnarray*}
1&\approx& f(b,\stackrel{(n-2)}{a}, x^2y^{-1}x)\\
 &\approx& f(b,\stackrel{(n-2)}{a}, f(x, \stackrel{(n-2)}{a}, xy^{-1}x))\\
 &\approx& f(b,\stackrel{(n-2)}{a}, f(x, \stackrel{(n-2)}{a}, f(x, \stackrel{(n-2)}{a},y^{-1}x)))\\
 &\approx& f(b,\stackrel{(n-2)}{a}, f(x, \stackrel{(n-2)}{a}, f(x, \stackrel{(n-2)}{a}, f(y^{-1},\stackrel{(n-2)}{a},  x))))\\
 &\approx& f(b,\stackrel{(n-2)}{a}, f(x, \stackrel{(n-2)}{a}, f(x, \stackrel{(n-2)}{a}, f(f(\overline{a}, \stackrel{(n-3)}{y}, \overline{y}, \overline{a}),\stackrel{(n-2)}{a},  x)))).
\end{eqnarray*}
This argument shows that $(G, \cdot)$ is equational noetherian. It is proved in \cite{BMRom} that every finite extension of an equational noetherian group is also equational noetherian. Since $G^{\ast}$ is a finite extension of $(G, \cdot)$, so it is also equational noetherian.

\end{proof}

Note that by the same method, we can prove  similar statements for the properties of being $q_{\omega}$-compact and $u_{\omega}$-compact. This shows that the algebraic geometric properties of groups $(G, \cdot)$, $(G^{\ast}, \circ)$ and the polyadic group $(G, f)$ are the same. As an interesting result of the above theorem, we see that every free polyadic group is equational noetherian.  Note that in our argument, the element $b$ and the automorphism $\theta$ have no role. So, we have the following corollary.

\begin{corollary}
Let $G$ be a group having an automorphism $\theta$ with the following property: there exists an element $b\in G$ and a number $n>2$ such that $\theta(b)=b$ and $\theta^{n-1}(x)=bxb^{-1}$, for all $x$. Then $G$ is equational noetherian in the language $\mathcal{L}$, if and only if, it is equational noetherian in the language $\mathcal{L}_{\theta}$.
\end{corollary}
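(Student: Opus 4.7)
The easy direction is immediate: since $\mathcal{L}\subseteq\mathcal{L}_{\theta}$, any $\mathcal{L}$-system is already an $\mathcal{L}_{\theta}$-system, so equational noetherianness in $\mathcal{L}_{\theta}$ automatically implies it in $\mathcal{L}$. For the converse, my plan is to route through the polyadic group supplied by the Hossz\'{u}--Gloskin hypotheses on $\theta$ and $b$.

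First I would observe that $\theta(b)=b$ and $\theta^{n-1}(x)=bxb^{-1}$ are exactly the conditions needed for Hossz\'{u}--Gloskin's theorem, so we may form the polyadic group $(G,f):=\Gf$. Assuming $(G,\cdot)$ is equational noetherian in $\mathcal{L}$, the preceding theorem lifts this to polyadic equational noetherianness of $(G,f)$ with no further work.

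The main step will be to translate $\mathcal{L}_{\theta}$-equations into polyadic equations. The key algebraic identity comes from specialising the defining formula of $f$ at $x_1=e$, $x_2=x$, $x_3=\cdots=x_n=e$ (where $e$ is the identity of $(G,\cdot)$), which gives
$$
f(e,x,e,\ldots,e)=\theta(x)\cdot b,
$$
so $\theta(x)=f(e,x,e,\ldots,e)\cdot b^{-1}$. Combining this with the retract formulas $u\cdot v=f(u,\stackrel{(n-2)}{a},v)$ and $u^{-1}=f(\overline{a},\stackrel{(n-3)}{u},\overline{u},\overline{a})$ used in the proof of the preceding theorem, a structural induction on $\mathcal{L}_{\theta}$-terms replaces every occurrence of $\cdot$, $^{-1}$, and $\theta$ by polyadic terms with constants in $G$. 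This yields, for each $\mathcal{L}_{\theta}$-equation $p\approx q$, a polyadic equation $\hat{p}\approx\hat{q}$ with the same solution set on $G$.

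Given an $\mathcal{L}_{\theta}$-system $S$, I would then form $\hat{S}=\{\hat{p}\approx\hat{q}:(p\approx q)\in S\}$, observe $V_G(S)=V_G(\hat{S})$, apply polyadic equational noetherianness of $(G,f)$ to extract a finite equivalent subsystem $\hat{S}_0\subseteq\hat{S}$, and pull it back to the corresponding finite $S_0\subseteq S$ with $V_G(S)=V_G(S_0)$. The hard part will be the translation: producing the displayed polyadic expression for $\theta$ and running the induction uniformly. Once that identity is in place, the rewriting is essentially bookkeeping, and since every substitution uses an identity holding pointwise on $G$, solution sets are automatically preserved.
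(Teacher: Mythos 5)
Your proposal is correct and follows essentially the route the paper intends: the paper deduces the corollary from Theorem 6.1 simply by remarking that $\theta$ and $b$ played no role in that proof, and the content you supply --- translating $\mathcal{L}_{\theta}$-terms into polyadic terms via $\theta(x)=f(e,x,\stackrel{(n-2)}{e})\cdot b^{-1}$ together with the retract formulas, then pulling a finite subsystem back --- is exactly the implicit argument. The only point to polish is that $(G,\cdot)$ is in general only \emph{isomorphic} to a retract of $(G,f)$, so an exact identity such as $u\cdot v=f(u,b^{-1},\stackrel{(n-3)}{e},v)$ is safer than $u\cdot v=f(u,\stackrel{(n-2)}{a},v)$; this is a cosmetic fix, and the paper's own proof of Theorem 6.1 glosses over the same issue.
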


As we saw, there is a close connection between algebraic geometries of $(G, f)$ and $G^{\ast}$. Now, we obtain a relation between coordinate polyadic group of a system of equations over $(G, f)$ and the coordinate group of the same system over $G^{\ast}$. For the sake of simplicity, we work in the case of coefficient-free equations.

Let $X=\{ x_1, \ldots, x_m\}$. If $S\subseteq F_{pol}^n(X)$ is a system of polyadic equations, then in the same time we can consider $S$ as a system of ordinary group equations. Recall that the radical of the system $S$ in $(G, f)$ and $G^{\ast}$ is defined by
$$
\mathrm{Rad}_G(S)=\mathrm{Rad}(V_G(S)) \ \ \mathrm{and}\ \   \mathrm{Rad}_{G^{\ast}}(S)=\mathrm{Rad}(V_{G^{\ast}}(S))
$$
respectively. The first one is a congruence in $F_{pol}^n(X)$ and the second one is a normal subgroup of $F(X)$. The corresponding coordinate {\em algebras} are
$$
\Gamma_G(S)=\frac{F_{pol}^n(X)}{\mathrm{Rad}_G(S)} \ \ \mathrm{and} \ \ \Gamma_{G^{\ast}}(S)=\frac{F(X)}{\mathrm{Rad}_{G^{\ast}}(S)}.
$$

\begin{theorem}
The coordinate group $\Gamma_{G^{\ast}}(S)$ is a quotient of the Post's cover of $\Gamma_G(S)$.
\end{theorem}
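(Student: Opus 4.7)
The plan is to invoke Theorem 4.1 to describe the Post's cover of $\Gamma_G(S)$ explicitly as a quotient of $F(X)$, and then to recognise $\Gamma_{G^{\ast}}(S)$ as a further quotient of that. Writing $\Gamma_G(S) = F_{pol}^n(X)/\mathrm{Rad}_G(S)$ and treating the congruence $\mathrm{Rad}_G(S)$ itself as a set of defining polyadic relations gives the presentation $\Gamma_G(S) = \langle X \mid \mathrm{Rad}_G(S) \rangle_{pol}$, and Theorem 4.1 identifies the Post's cover with $\langle X \mid \mathrm{Rad}_G(S) \rangle_{gr}$; concretely,
$$
\Gamma_G(S)^{\ast} \;=\; F(X)/N, \qquad N := \bigl\langle\!\bigl\langle \, uv^{-1} : (u,v) \in \mathrm{Rad}_G(S) \, \bigr\rangle\!\bigr\rangle_{F(X)},
$$
where the double brackets denote normal closure in $F(X)$.

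Because $\Gamma_{G^{\ast}}(S) = F(X)/\mathrm{Rad}_{G^{\ast}}(S)$, the identity on $F(X)$ descends to the desired surjection $\Gamma_G(S)^{\ast} \twoheadrightarrow \Gamma_{G^{\ast}}(S)$ exactly when $N \subseteq \mathrm{Rad}_{G^{\ast}}(S)$. As $\mathrm{Rad}_{G^{\ast}}(S)$ is normal in $F(X)$, it suffices to check each normal generator: for every $(u,v) \in \mathrm{Rad}_G(S)$, the group word $uv^{-1}$ must vanish at every $\vec h \in V_{G^{\ast}}(S)$. One half of this is immediate from property~4 of the Post's cover, since for $\vec g \in G^m$ the polyadic evaluation $u(\vec g)$ inside $(G,f)$ coincides with the ordinary iterated product $u(\vec g)$ taken in $G^{\ast}$; hence $V_G(S) \subseteq V_{G^{\ast}}(S)$ and the defining property of $\mathrm{Rad}_G(S)$ yields $u(\vec g) = v(\vec g)$ in $G^{\ast}$ on all of $V_G(S)$.

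The main obstacle is extending this equality to an arbitrary $\vec h \in V_{G^{\ast}}(S) \subseteq (G^{\ast})^{m}$. My intended route is to argue that each such $\vec h$ induces a polyadic evaluation map $F_{pol}^n(X) \to \mathrm{der}^n(G^{\ast})$, $x_i \mapsto h_i$, and that this map factors through $\Gamma_G(S)$; the latter then forces $u(\vec h) = v(\vec h)$ for every $(u,v) \in \mathrm{Rad}_G(S)$, as required. To produce this factorisation I would exploit that $G^{\ast}$ is generated by the distinguished coset $G$ (property~5) together with the Hossz\'u--Gloskin realisation $(G,f) = \mathrm{der}_{\theta,b}(G,\cdot)$ sitting inside $G^{\ast}$, in order to reduce the evaluation at $\vec h$ to evaluations at tuples coming from $V_G(S)$, where the needed identities are already available. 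Making this reduction precise is the subtle technical point of the proof.
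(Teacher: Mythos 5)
Your setup coincides with the paper's: both identify the Post's cover of $\Gamma_G(S)$, via Theorem 4.1, with $F(X)/N$ where $N$ is the normal closure in $F(X)$ of $\{uv^{-1}:(u,v)\in\mathrm{Rad}_G(S)\}$, and both reduce the theorem to the containment $N\subseteq\mathrm{Rad}_{G^{\ast}}(S)$. The difference is in how that containment is handled: the paper asserts it in one line (``$G\subseteq G^{\ast}$ and polyadic equations are group equations''), whereas you correctly observe that this reasoning only gives $V_G(S)\subseteq V_{G^{\ast}}(S)$, which produces an inclusion of radicals in the \emph{wrong} direction, and you isolate the passage from $V_G(S)$ to all of $V_{G^{\ast}}(S)$ as the real difficulty. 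You then leave that step unproven, so the proposal is incomplete as it stands. Worse, the route you sketch is circular: the evaluation $F_{pol}^n(X)\to\mathrm{der}^n(G^{\ast},\circ)$, $x_i\mapsto h_i$, factors through $\Gamma_G(S)$ precisely when $\mathrm{Rad}_G(S)$ lies in its kernel congruence, i.e.\ precisely when $u(\vec h)=v(\vec h)$ for all $(u,v)\in\mathrm{Rad}_G(S)$ --- which is the statement you set out to prove. Nor does generation of $G^{\ast}$ by $G$ help: for $\vec h\notin G^m$ the evaluation does not land in the distinguished coset $G$, so there is no reduction to tuples in $V_G(S)$.

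In fact the missing step cannot be supplied in general, so your instinct that something is wrong here is sound. Take $n=3$, let $(G,f)$ be the ternary group of the three transpositions of $S_3$ (so $G^{\ast}=S_3$), $X=\{x\}$ and $S=\emptyset$. Since $t^{a}=t$ for every transposition $t$ and every odd $a$, the radical $\mathrm{Rad}_G(\emptyset)$ identifies all of $F_{pol}^3(X)=\{x^{2k+1}:k\in\mathbb{Z}\}$, so $\Gamma_G(\emptyset)$ is the singleton polyadic group, whose Post's cover is $\mathbb{Z}_2$. On the other hand $V_{G^{\ast}}(\emptyset)=S_3$, so $\mathrm{Rad}_{G^{\ast}}(\emptyset)=\langle x^6\rangle$ and $\Gamma_{G^{\ast}}(\emptyset)\cong\mathbb{Z}/6\mathbb{Z}$, which is not a quotient of $\mathbb{Z}_2$; concretely, $(x^{3},x)\in\mathrm{Rad}_G(\emptyset)$ while $x^{2}\notin\mathrm{Rad}_{G^{\ast}}(\emptyset)$. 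Thus $N\not\subseteq\mathrm{Rad}_{G^{\ast}}(S)$ in general, and the theorem requires either an additional hypothesis or a reversal of the roles of the two groups (when $V_G(S)\neq\emptyset$, every $w\in\mathrm{Rad}_{G^{\ast}}(S)$ has height $\equiv 0\ (\mathrm{mod}\ n-1)$ and equals $(wx_1)x_1^{-1}$ with $(wx_1,x_1)\in\mathrm{Rad}_G(S)$, so $\mathrm{Rad}_{G^{\ast}}(S)\subseteq N$ and it is the Post's cover of $\Gamma_G(S)$ that is a quotient of $\Gamma_{G^{\ast}}(S)$). No completion of your outline, or of the paper's argument, can avoid this obstruction.
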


\begin{proof}
Note that $\mathrm{Rad}_G(S)$ is the set of all polyadic consequences of the elements of $S$ in $(G, f)$, while $\mathrm{Rad}_{G^{\ast}}(S)$ is the set of all ordinary consequences of the elements of $S$ in $G^{\ast}$. Since $G\subseteq G^{\ast}$ and all polyadic equations are in the same time group equations, so we have $\mathrm{Rad}_G(S)\subseteq \mathrm{Rad}_{G^{\ast}}(S)$. Recall that
$$
\Gamma_{G^{\ast}}(S)=\langle X| \mathrm{Rad}_{G^{\ast}}(S)\rangle_{gr}.
$$
Hence $\Gamma_{G^{\ast}}(S)$ is a quotient of the group $\langle X| \mathrm{Rad}_G(S)\rangle_{gr}$. By theorem 4.1, the recent group is the Post's cover of $\Gamma_G(S)=\langle X| \mathrm{Rad}_G(S)\rangle_{pol}$. This completes the proof.
\end{proof}


\begin{thebibliography}{30}

\bibitem{Artam}  Artamonov V., {\it Free $n$-groups}, Matematicheskie Zametki, 1970,
 {\bf 8}, pp. 499-507.

\bibitem{Art} Artamonov V.,
{\it On Schreier varieties of $n$-groups and $n$-semigroups},
Trudy Semin. im. I. G. Petrovskogo, 1979, {\bf 5}, pp. 193-203.


\bibitem{BMR1} Baumslag G., Myasnikov A.,  Remeslennikov V.,
{\it Algebraic geometry over groups, I. Algebraic sets and ideal theory}. J.
 Algebra, 1999, {\bf 219}, pp. 16-79.

\bibitem{BMR2}
 Baumslag G., Myasnikov A.,  Remeslennikov V.,
{\it Discriminating completions of hyperbolic groups.} J. Group Theory, 2000, {\bf 3}, pp. 467-479.


\bibitem{BMRom}
Baumslag G., Myasnikov A.,  Romankov V., {\it Two theorems about equationally
noetherian groups}. J. Algebra, 1997, {\bf 194}, pp. 654-664.




\bibitem{DMR1}
Daniyarova E., Myasnikov A., Remeslennikov V., {\it Unification theorems in algebraic geometry }. Algebra and Discrete Mathamatics, 2008, {\bf 1}, pp. 80-112.

\bibitem{DMR2}
Daniyarova E., Myasnikov A., Remeslennikov V., {\it Algebraic geometry over algebraic structures, II: Fundations}. J. Math. Sci., 2012, {\bf 185},
pp. 389-416.

\bibitem{DMR3}
Daniyarova E., Myasnikov A., Remeslennikov V., {\it Algebraic geometry over algebraic structures, III: Equationally noetherian property and
compactness}. South. Asian Bull. Math., 2011, {\bf 35}, pp. 35-68.

\bibitem{DMR4}
Daniyarova E., Myasnikov A., Remeslennikov V., {\it Algebraic geometry over algebraic structures, IV: Equatinal domains and co-domains}.
Algebra and Logic, {\bf 49}, pp. 483-508.

\bibitem{DR1}
Daniyarova E.,  Remeslennikov V., {\it Bounded algebraic geometry over free Lie algebras}. Algebra and Logic, 2005, {\bf 44}, pp. 148-167.

\bibitem{Dor} D\"ornte W., {\it Unterschungen \"uber einen verallgemeinerten Gruppenbegriff}, Math. Z., 1929,  {\bf  29}, pp. 1-19.


\bibitem{Dud1}  Dudek W., {\it Varieties of polyadic groups}, Filomat, 1995, {\bf 9}, pp. 657-674.

\bibitem{Dud2} Dudek W., {\it Remarks on $n$-groups}, Demonstratio Math., 1980, {\bf 13}, pp. 65-181.

\bibitem{DG}  Dudek W.,  Glazek K., {\it Around the Hossz\'u-Gluskin Theorem for $n$-ary groups},  Discrete Math., 2008, {\bf 308}, pp. 4861-4876.

\bibitem{DM1} Dudek W., Michalski J., {\it On a generalization of Hossz\'u theorem}, Demonstratio Math., 1982, {\bf  15}, pp. 437-441.

\bibitem{DM}  Dudek W. ,  Michalski J., {\it On retract of polyadic groups}, Demonstratio Math., 1984,  {\bf  17}, pp. 281-301.


\bibitem{Dud-Shah} Dudek W.,  Shahryari M., {\it Representation theory of polyadic groups}, Algebras and Representation Theory, 2012, {\bf  15}, pp. 29-51.




\bibitem {Gal} Galmak A., {\it $N$-ary groups}, Gomel University Press, 2003.

\bibitem {Gal2} Galmak A., {\it Remarks on polyadic groups},  Quasigroups and Related Systems, 2000,  {\bf  7}, pp. 67-70.

\bibitem {Gle} Gleichgewicht B.,  Glazek K.,  {\it Remarks on n-groups as abstract algebras},  Colloq. Math., 1967,  {\bf 17}, pp. 209-219.




\bibitem{Hos} Hossz\'{u} M., {\it On the explicit form of $n$-groups}, Publ. Math., 1963, {\bf 10},  pp. 88-92.

\bibitem {Kas} Kasner E., {\it An extension of the group concept}, Bull. Amer. Math. Soc., 1904, {\bf  10}, pp. 290-291.


\bibitem{KMTarski}
Kharlampovich O., Myasnikov A., {\it Tarski's problem about the elementary theory of free groups has a psitive solution}. E.R.A. of AMS, 1998,
{\bf 4 } pp. 101-108.

\bibitem{KMTarski2}
Kharlampovich O., Myasnikov A., {\it The elemntary theory of free non-abelian groups}. J. Algebra, 2006, {\bf 302}, pp. 451-552.


\bibitem{KM}
Kharlampovich O., Myasnikov A., {\it Irreducible affine varieties over a free group. I: irreducibility of quadratic equations and Nullstellensatz}. J.
 Algebra, 1998, {\bf 200} pp. 472-516.

\bibitem{Khod-Shah} Khodabandeh H., Shahryari M.,
\newblock {\it On the representations and automorphisms of polyadic groups},
\newblock Communications in Algebra, 2012, {\bf 40}, pp. 2199-2212.

\bibitem{Khod-Shah2} Khodabandeh H.,  Shahryari M.,
\newblock {\it Simple polyadic groups},
\newblock Siberian Math. Journal, 2014, {\bf 55}, pp. 734-744.


\bibitem{MSH}
Morar P., Shevlyakov A., {\it Algebraic geometry over additive positive monoids: Systems of coefficient free equations}.
Combinatorial and Geometric group Theory: Dortmund and Carleton Conferences, 2010,  pp. 261-278.


\bibitem{MR}
Myasnikov A., Remeslennikov V., {\it Algebraic geometry over groups, II. Logical Fundations}. J.
 Algebra, 2000, {\bf 234}, pp. 225-276.

\bibitem{Post} Post E., {\it Polyadic groups}, Trans. Amer. Math. Soc., 1940, {\bf  48}, pp. 208-350.


\bibitem{SEL}
Sela Z., {\it Diophantine geometry over groups:I-X}. preprints, Arxiv.


\bibitem{Shah2} Shahryari M., {\it Representations of finite polyadic groups}, Communications in Algebra,  2012, {\bf 40},  pp. 1625-1631.


\bibitem{SHEV}
Shevlyakov A., {\it Algebraic geometry over additive monoid of natural numbers: the classification of coordiante monoids}.
Groups, Complexity and Cryptology, 2010, {\bf 2}, pp. 91-111.

\bibitem{Sok} Sokolov E., {\it On the Gluskin-Hossz\'{u} theorem for Dornte $n$-groups}, Mat. Issled., 1976,  {\bf 39}, pp. 187-189.






































\end{thebibliography}
\end{document}